\setlist[description]{leftmargin=0cm,  labelindent=\parindent}
\newcommand{\sM}{{\mathcal M}}
\newcommand{\sO}{{\mathcal O}}
\newcommand{\scrL}{{\mathscr L}}
\newcommand{\C}{{\mathbb C}}
\renewcommand{\P}{{\mathbb P}}
\newcommand{\Q}{{\mathbb Q}}
\newcommand{\R}{{\mathbb R}}
\newcommand{\IS}{{\mathbb S}}
\newcommand{\Z}{{\mathbb Z}}
\newcommand{\gothF}{{\mathfrak F}}
\newcommand{\gothX}{{\mathfrak X}}
\newcommand{\csphere}{\IS^6}
\newcommand{\dsphere}{S^6}
\newcommand{\abs}[1]{{\left|#1\right|}}
\newcommand{\Aut}{\operatorname{Aut}}
\newcommand{\id}{{\rm id}}
\newcommand{\ch}{{\rm ch}}
\newcommand{\codim}{\operatorname{codim}}
\newcommand{\Exc}{\operatorname{Exc}}
\newcommand{\Hom}{\operatorname{Hom}}
\renewcommand{\implies}{\Rightarrow}
\newcommand{\isom}{{\ \cong\ }}
\newcommand{\ohne}{{\ \setminus \ }}
\newcommand{\ohnenull}{{\ \setminus \{0\} }}
\newcommand{\Pic}{\operatorname{Pic}}
\newcommand{\ratl}{\dashrightarrow}
\newcommand{\rk}{{\rm rk}}
\renewcommand{\to}[1][]{\xrightarrow{\ #1\ }}
\newcommand{\tensor}{\otimes}
\newcommand{\td}{{\rm td}}
\newcommand{\inverse}[1]{{#1}^{-1}}
\newcommand{\Wedge}{\textstyle\bigwedge}
\newcommand{\ke}{{\mathcal E}}
\newcommand{\kf}{{\mathcal F}}
\newcommand{\km}{{\mathcal M}}
\newcommand{\ko}{{\mathcal O}}
\newcommand{\IC}{{\mathbb C}}
\newcommand{\IP}{{\mathbb P}}
\newcommand{\IZ}{{\mathbb Z}}
\newtheoremstyle{citing}
  {}
  {}
  {\itshape}
  {}
  {\bfseries}
  {\textbf{.}}
  {.5em}
  {\thmnote{#3}}
\theoremstyle{plain}
\newtheorem{theorem}{Theorem}
\newtheorem{lemma}[theorem]{Lemma}
\newtheorem{proposition}[theorem]{Proposition}
\newtheorem{corollary}[theorem]{Corollary}
\numberwithin{theorem}{section}
\theoremstyle{definition}
\newtheorem{definition}[theorem]{Definition}
\numberwithin{equation}{section}
\theoremstyle{remark}
\newtheorem{remark}[theorem]{Remark}
\newtheorem*{claim}{Claim}
\theoremstyle{citing}
\newcommand{\Exz}{{\operatorname{Exc}}}
\newcommand{\trdeg}{{\operatorname{trdeg}}}
\theoremstyle{plain}
\theoremstyle{remark}
\newtheorem{example}[theorem]{Example}
\newcommand*{\da@rightarrow}{\mathchar"0\hexnumber@\symAMSa 4B }
\newcommand*{\da@leftarrow}{\mathchar"0\hexnumber@\symAMSa 4C }
\newcommand*{\xdashrightarrow}[2][]{%
  \mathrel{%
    \mathpalette{\da@xarrow{#1}{#2}{}\da@rightarrow{\,}{}}{}%
  }%
}
\newcommand{\xdashleftarrow}[2][]{%
  \mathrel{%
    \mathpalette{\da@xarrow{#1}{#2}\da@leftarrow{}{}{\,}}{}%
  }%
}
\newcommand*{\da@xarrow}[7]{%
  \sbox0{$\ifx#7\scriptstyle\scriptscriptstyle\else\scriptstyle\fi#5#1#6\m@th$}%
  \sbox2{$\ifx#7\scriptstyle\scriptscriptstyle\else\scriptstyle\fi#5#2#6\m@th$}%
  \sbox4{$#7\dabar@\m@th$}%
  \dimen@=\wd0 %
  \ifdim\wd2 >\dimen@
    \dimen@=\wd2 %
  \fi
  \count@=2 %
  \def\da@bars{\dabar@\dabar@}%
  \@whiledim\count@\wd4<\dimen@\do{%
    \advance\count@\@ne
    \expandafter\def\expandafter\da@bars\expandafter{%
      \da@bars
      \dabar@ 
    }%
  }%
  \mathrel{#3}%
  \mathrel{%
    \mathop{\da@bars}\limits
    \ifx\\#1\\%
    \else
      _{\copy0}%
    \fi
    \ifx\\#2\\%
    \else
      ^{\copy2}%
    \fi
  }%
  \mathrel{#4}%
}
\title[Complex $\csphere$]{The complex geometry of a hypothetical complex structure on $\dsphere$}
\author{Christian Lehn}
\address{Christian Lehn\\ Fakult\"at f\"ur Mathematik\\ Technische Universit\"at Chemnitz\\
Reichenhainer Stra\ss e 39, 09126 Chemnitz, Germany}
\email{christian.lehn@mathematik.tu-chemnitz.de}
\author{S\"onke Rollenske}
 \address{S\"onke Rollenske\\FB 12/Mathematik und Informatik\\
 Philipps-Universit\"at Marburg\\
 Hans-Meerwein-Str. 6\\
 35032 Marburg\\
 Germany}
 \email{rollenske@mathematik.uni-marburg.de}
\author{Caren Schinko}
\address{Caren Schinko\\Lehrstuhl f\"ur Algebra und Zahlentheorie\\
  Universit\"at Augsburg\\Universit\"atsstra{\ss}e 14\\
  86159 Augsburg\\ Germany}
\email{caren.schinko@math.uni-augsburg.de}
\let\origmaketitle\maketitle
\def\maketitle{
  \begingroup
  \def\uppercasenonmath##1{} 
  \let\MakeUppercase\relax 
  \origmaketitle
  \endgroup
}
\begin{document}
\thispagestyle{empty}

\begin{abstract}
This article is a survey about or introduction to certain aspects of the complex geometry of a hypothetical complex structure on the six-sphere. We discuss a result of Peternell--Campana--Demailly on the algebraic dimension of a hypothetical complex six-sphere and give some examples. We also give an overview over an application of Huckleberry--Kebekus--Peternell on the group of biholomorphisms of such a complex six-sphere.
\end{abstract}
\subjclass[2010]{32J17, 32M05 (Primary); 32Q55, 32Q60, 53C15 (Secondary).}
\keywords{six sphere, complex structure, automorphism group, algebraic dimension}


\maketitle

\setlength{\parindent}{1em}
\setcounter{tocdepth}{1}
\tableofcontents

\section{Introduction}\label{sec intro}

The Hopf problem concerns the existence of a compact complex manifold whose underlying differentiable manifold is the sphere $\dsphere$; we will refer to this as a hypothetical complex six sphere and denote such a manifold\footnote{Note that there might be more than one $\csphere$.} by $\csphere$. As a compact complex manifold such an $\csphere$ can be studied using the toolbox of complex analytic geometry. The hope, which has not materialized so far, is that one can either derive a contradiction or find a hint for a direct geometric construction thus solving the Hopf problem.

The main results in this direction, besides the restrictions on Hodge numbers described in \cite{Angella}, have been obtained by Campana, Demailly, and Peternell in \cite{CDP} and subsequently by Huckleberry, Kebekus, and Peternell in \cite{HKP}. They concern the algebraic dimension of $\csphere$ and its automorphism group.

Here we do not aim to give a full independent proof of their results, instead we intend this paper as an introduction and stepping stone to their methods and results.

{
We have been notified by Thomas Peternell that the arguments  used in \cite{CDP} to control the algebraic dimension need an additional assumption, and that they are preparing a corrigendum\footnote{After the present article had been published, this corrigendum appeared on the arXiv, see \cite{CDP19}.} completing the study of meromorphic functions on a hypothetical $\csphere$. We were informed that there are three cases to consider, and as in one of them the original argument of \cite{CDP} needs only minor changes we will follow this particular case. We do not claim originality for the material presented here.
}

Thus, we will show in Section \ref{sec main theorem} that a hypothetical $\csphere$ does not admit meromorphic functions that do not extend to a holomorphic map to $\IP^1$. It turns out that this is primarily a consequence of the topology of $S^6$: by the Gau\ss-Bonnet theorem the topological Euler number is given by the integral over the top Chern class of $X$ and due to the vanishing of $b_2(X)$ the Hirzebruch-Riemann-Roch formula shows that $e(X)=0$ is equivalent to $\chi(T_X)=0$ where $\chi$ is the holomorphic Euler characteristic for a complex vector bundle on $X$. This last vanishing is deduced from a more general cohomological vanishing result, Theorem \ref{theorem main2 new}.

Before we describe the method of \cite{HKP} to control the automorphism group of a hypothetical $\csphere$ of algebraic dimension $0$ in Section \ref{sect: aut}, we construct in Section \ref{sec examples}  some examples of non-K\"ahler manifolds to show that Theorem \ref{theorem main2 new} does indeed apply to some manifolds.

We will now start with a gentle introduction to some of the ideas needed to prove the above results.

\subsection*{Acknowledgements}
This research was supported  by the DFG via the first authors research grants Le 3093/2-1 and  Le 3093/3-1 and the second authors Emmy-Noether grant Ro 3734/2-1.

{We are grateful to Thomas Peternell for informing us about the gap in
\cite{CDP} and under which assumptions the approach to the problem is still
viable, in particular, for sending us a preliminary version of \cite{CDP19} where a proof of the case treated here was already contained.}
We would like to thank Giovanni Bazzoni for comments on an earlier version and the participants and organizers of the workshop from which this paper evolved for a fertile week.
The first author would like to thank Patrick Graf for interesting discussions. The third author would like to thank her supervisor Marc Nieper-Wi{\ss}kirchen for his support.

\thispagestyle{empty}

\section{Preliminaries}\label{sec prelim}

\subsection{Meromorphic functions} 
Recall that an analytic subset in a complex manifold is a (closed) subset which locally around every point can be described as zero set of finitely many holomorphic functions. Analytic subsets are by definition the closed subsets of the analytic Zariski topology.

\begin{definition}
Let $X$ be a complex manifold. A \emph{meromorphic function} on an open set $U \subset X$ is a holomorphic function $f\colon U \ohne P \to \C$ where $P \subset U$ is an analytic subset called the \emph{polar set} of $f$ such that for every $p \in U$ there exist an open neighbourhood $V \subset U$ and holomorphic functions $g, h\colon  V \to \C$ such that $h$ has no zeros outside $P$ and $f=\frac{g}{h}$ on $V \ohne P$.
\end{definition}

\begin{remark}\label{rem: ex meromorphic function}
 In higher dimensions, meromorphic functions (and maps), usually denoted by dashed arrows,  behave differently from the familiar case of one complex variable. 
 Consider as an example $X = \IP^2$ with homogeneous coordinates $(x:y:z)$  and the rational map given by $f = x/y\colon X\dashrightarrow \IC$. 
 Then clearly $f$ is not globally a quotient of two holomorphic functions. 
 A maximal domain of definition is $U = X\setminus \{(x: y:z) \mid y=0\}$ so that $\Sigma = \{y=0\}$ becomes the pole divisor of $f$ and $\{x=0\}$ is the divisor of zeros. Note that $f$ cannot be extended to a holomorphic map to $\IP^1$, because it cannot be well defined at the point $(0:0:1)$. Indeed, it is well known, that there is no holomorphic map from $\IP^2$ to $\IP^1$.
 \end{remark}

Meromorphic functions can be added, multiplied, and inverted outside the polar set and thus form a field which leads to the definition of the algebraic dimension

\begin{definition}
Let $X$ be a complex manifold and denote by $\sM(X)$ its field of meromorphic functions. The \emph{algebraic dimension} of $X$ is defined to be $a(X):=\trdeg_\C\sM(X)$.
\end{definition}

The condition $a(X)>0$ is thus equivalent to the existence of a non-constant meromorphic function. Riemann surfaces always satisfy $a(X)=1$. In higher dimension $a(X)$ may attain every value between $0$ and $\dim X$. This notion is not well behaved for non-compact manifolds. It is an interesting exercise to show that $a(\C^N)=\infty$ for example. 
If $X$ is a projective\footnote{By a projective manifold, we understand a closed submanifold of complex projective space $\P^n$. In particular, a projective manifold is always compact. Recall that by Chow's Theorem, a projective manifold is always algebraic.} manifold, then $a(X)=\dim_\C X$. In this case, meromorphic functions on $X$ are the same as rational functions on the corresponding algebraic variety $\gothX$ and it is well known that the field of rational functions determines the algebraic variety $\gothX$ up to bimeromorphic equivalence, see Definition \ref{definition meromorphic map}. 
By a theorem of Thimm \cite[6.11, Hauptsatz III]{Th}, we have $a(X) \leq \dim_\C X$ for a compact complex manifold.\footnote{The result has an interesting history. The first author who published a proof of such a theorem for abelian varieties was Weierstra\ss, see also chapter 4.10 of Fischer's book \cite{Fi} for some historical remarks and a proof of the theorem for reduced and irreducible compact complex spaces. Fischer names the theorem after Weierstra\ss-Siegel-Thimm and Siegel seems to have been the first to give a complete proof of Weierstra\ss' theorem in \cite{Siegel}. Thimm was the first to prove the theorem for arbitrary compact complex manifolds, see  Grauert--Remmert \cite[Chapter 10 \S 6, 4.]{GR} for another proof and some historical remarks. Finally, Thimm himself wrote an article on the history of this theorem, see \cite{Th2}.}

\begin{definition}\label{definition meromorphic map}
Let $X$ and $Y$ be complex manifolds. A \emph{meromorphic map $f\colon X \ratl Y$} is a holomorphic map $f\colon X\ohne P \to Y$ where $P \subset X$ is a nowhere dense closed subset with the following properties: there is an analytic subset $\Gamma \subset X \times Y$ such that the map $p\colon \Gamma \to X$ induced by the projection to the first factor is proper, is biholomorphic over $X\ohne P$, the subset $p^{-1}(P)  \subset \Gamma$ is nowhere dense, and $f=q \circ p^{-1}$ where $q\colon \Gamma \to Y$ is induced by projection to the second factor. The set $\Gamma$ is uniquely determined by $f$ and is called the \emph{graph of $f$}. The subset $X\ohne P$ is called the \emph{domain of definition} of $f$ and its complement $P$ is called the \emph{locus of indeterminacy} of $f$. We usually consider two meromorphic maps $f,f'\colon X \ratl Y$ equal if they coincide on the intersection of their domain of definitions. Thus, there is an obvious notion of composition for meromorphic maps which are dominant\footnote{Recall that a holomorphic map is \emph{dominant} if its image contains a dense open subset. }.
A meromorphic map $f\colon X \ratl  Y$ is called \emph{bimeromorphic} if there exists a meromorphic map $g\colon Y \ratl X$ such that $f\circ g = \id_Y$ and $g\circ f=\id_X$. 
\end{definition}

A meromorphic function $f\colon X \ratl \C$ thus gives rise to a meromorphic map $f\colon X \ratl \P^1$. A \emph{fibre} of a meromorphic map $f\colon  X \ratl Y$ is the closure of a fibre of the restriction of $f$ to its domain of definition. So in particular, fibres over different points of $Y$ may intersect in $X$. 

It is convenient to introduce the following terminology.

\begin{definition}
A holomorphic map $f\colon Y \to X$ between complex manifolds is a \emph{proper modification} if $f$ is proper and bimeromorphic.
\end{definition}

\begin{definition}
Let $g\colon X \ratl Z$ be a meromorphic map.  A \emph{resolution of indeterminacy} for $g$ is a commuting diagram 
\[
\begin{tikzcd}
 Y \dar[swap]{\pi} \arrow{dr}{f}\\
 X \rar[dashed][swap]{g} & Z
\end{tikzcd}
\]
where $\pi \colon  Y \to X$ is a proper modification and $f$ is holomorphic. The diagram is said to commute if it commutes on a dense open subset where all maps are well-defined.
\end{definition}

Resolutions of indeterminacy where $Y$ is a complex manifold always exist: just take a resolution of singularities of the graph $\Gamma \subset X \times Z$. As we observed, a meromorphic function on $X$ gives a meromorphic map $f\colon X \ratl \P^1$. One can always obtain from this a meromorphic map $g\colon X \ratl C$ with connected fibres to a smooth compact curve $C$ by taking the Stein factorization of a resolution of indeterminacy.

\begin{remark}
 Going back to the example from Remark \ref{rem: ex meromorphic function} we consider the meromorphic map $f = x/y \colon \IP^2 \dashrightarrow \IP^1$. The locus of indeterminacy is the point $P = (0:0:1)$ and the closure of the graph of $f|_{X\setminus\{P\}}$ in the product $\IP^2 \times \IP^1$ is the blow up of $\IP^2$ in the point $P$, which is the $\IP^1$-bundle $\IP(\ko\oplus\ko(-1))\to \IP^1$. The fibres of $f$ are the images of the fibres of this $\IP^1$-bundle, that is, the lines through the point $P$. 
 This is the classical linear projection from a point. It is an easy but instructive exercise in classical geometry to understand the projection from two distinct points, that is, the rational map 
 \[ \IP^2 \ratl \IP^1\times \IP^1, \qquad (x: y: z) \mapsto \left(\frac x y , \frac z y \right).\]
\end{remark}

\begin{lemma}\label{lemma pullback}
Let $\pi\colon Y \to X$ be a proper modification from an integral (i.e., irreducible and reduced) compact complex space to a compact complex manifold and let $D$ be an effective Cartier divisor on $Y$. If $D$ does not contain any divisorial component of the  $\pi$-exceptional locus, then there exists a unique line bundle $L$ on $X$ and an effective Cartier divisor $\Sigma \subset Y$ whose support is contained in the exceptional locus of $\pi$ 
such that
\[\ko_Y(D) = \pi^*L \tensor \sO_Y(-\Sigma).\]
\end{lemma}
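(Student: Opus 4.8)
The plan is to transport $D$ to $X$ away from the exceptional locus, extend the result to a Cartier divisor on the manifold $X$, and then recognise the remaining discrepancy on $Y$ as an effective exceptional divisor. To set up, write $E:=\Exc(\pi)\subset Y$ for the exceptional locus and $P:=\pi(E)\subset X$. Since $X$ is a complex manifold it is normal, so the indeterminacy locus of the meromorphic inverse $\pi^{-1}\colon X\ratl Y$ has codimension at least two; hence $\codim_X P\ge 2$, the map $\pi$ restricts to a biholomorphism $Y\setminus E\cong X\setminus P$, and $\dim P\le\dim X-2$.

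For the construction of $L$, I would use this biholomorphism to transport $D|_{Y\setminus E}$ to an effective divisor $D'$ on $X\setminus P$ of pure codimension one. Because $\dim P<\dim D'$, the Remmert--Stein extension theorem shows that the closure $D_X:=\overline{D'}\subset X$ is analytic, again of pure codimension one. As $X$ is smooth its local rings are factorial, so $D_X$ is automatically Cartier, and I set $L:=\ko_X(D_X)$ with tautological section $s$ cutting out $D_X$.

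Next comes the comparison. Since $\pi$ is dominant, $\pi^*D_X$ is a well-defined effective Cartier divisor on $Y$, with defining section $\pi^*s$ of $\pi^*L$. Over $Y\setminus E$ both $\pi^*D_X$ and $D$ restrict to the transport of $D'$, so
\[\Sigma:=\pi^*D_X-D\]
is a Cartier divisor whose support is contained in $E$, and by construction $\ko_Y(D)=\pi^*L\tensor\sO_Y(-\Sigma)$. It remains to prove that $\Sigma$ is effective, which is equivalent to holomorphy of the meromorphic section $\pi^*s\tensor s_D^{-1}$ across $E$, where $s_D$ is the canonical section of $\ko_Y(D)$. I would first verify the inequality $\ord_F(\Sigma)\ge 0$ along every prime divisor $F\subset Y$: if $F\not\subseteq E$ its generic point lies in $Y\setminus E$, where $\Sigma$ vanishes, so $\ord_F(\Sigma)=0$; if $F$ is a divisorial component of $E$, the hypothesis that $D$ contains no component of $E$ forces the generic point of $F$ to lie off $D$, so there a local equation of $D$ is a unit, locally $\Sigma=\pi^*D_X$ is cut out by the pullback $\pi^*\phi$ of a holomorphic function, and $\ord_F(\Sigma)=\ord_F(\pi^*\phi)\ge 0$. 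The genuinely delicate step, which I expect to be the crux, is to upgrade these codimension-one inequalities to honest effectivity of the Cartier divisor $\Sigma$, i.e.\ to local divisibility $s_D\mid\pi^*s$ in $\ko_Y$ across $E$. Since $Y$ is only assumed integral, its local rings need not be normal and one cannot conclude from the valuations alone; here I would pass to the normalization $\nu\colon\widetilde Y\to Y$, where non-negativity of the orders along all prime divisors does force holomorphy, and then descend the resulting identity of line bundles along $\nu$.

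Finally, for uniqueness of $L$, suppose $L_1,L_2$ both satisfy the conclusion with exceptional divisors $\Sigma_1,\Sigma_2$. Restricting to $Y\setminus E\cong X\setminus P$ trivialises $\sO_Y(-\Sigma_1)$ and $\sO_Y(-\Sigma_2)$ and yields $L_1|_{X\setminus P}\cong L_2|_{X\setminus P}$. As $X$ is smooth and $\codim_X P\ge 2$, the restriction map $\Pic(X)\to\Pic(X\setminus P)$ is injective, whence $L_1\cong L_2$.
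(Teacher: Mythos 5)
Your construction is essentially the paper's own: your $D_X$ is exactly the pushforward $\bar D=\pi_*D$ with which the paper starts (the paper takes its existence as a divisor for granted, whereas you justify it via Remmert--Stein and factoriality of the local rings of the manifold $X$, which is correct), your $\Sigma=\pi^*D_X-D$ is the paper's $\Sigma$, and your uniqueness argument via injectivity of $\Pic(X)\to\Pic(X\setminus P)$ for $\codim_X P\ge 2$ is correct (the paper does not even address uniqueness). The codimension-one order computations are also fine, so up to that point you have written out a more detailed version of the paper's very short proof.

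The problem is the step that you yourself flag as the crux: it is resolved by an argument that is not valid. Holomorphy of a meromorphic section, equivalently effectivity of a Cartier divisor, does \emph{not} descend along the normalization $\nu\colon\widetilde Y\to Y$: knowing that $\nu^*\bigl(\pi^*s\tensor s_D^{-1}\bigr)$ is holomorphic on $\widetilde Y$ only says that the quotient is \emph{weakly} holomorphic on $Y$, which on a non-normal space is strictly weaker than holomorphic. Concretely, on the cuspidal cubic $Y=\{y^2=x^3\}$ with normalization $t\mapsto(t^2,t^3)$, the meromorphic function $y/x$ pulls back to the holomorphic function $t$, so the Cartier divisor $\div(y)-\div(x)$ becomes effective upstairs, yet $y/x\notin\ko_{Y,0}$ and that divisor is \emph{not} effective on $Y$. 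So ``non-negativity of orders on $\widetilde Y$, then descend along $\nu$'' establishes nothing about effectivity of $\Sigma$ on $Y$ itself; since the lemma assumes only that $Y$ is integral (and in the paper's application $Y$ is a graph closure, which the paper explicitly warns may fail to be smooth), this is a genuine gap. Closing it requires more than the local normalization picture: one has to exploit the global structure of the situation --- that $\pi$ is \emph{proper} with compact connected fibres and that the target $X$ is a manifold, so that the would-be quotient can be controlled via pushforward to $X$ and extension across the codimension-two set $\pi(\Exc(\pi))$ --- rather than dividing on $\widetilde Y$ and descending. To be fair, the paper's own proof is equally silent at exactly this point (it simply asserts that $\pi^*\bar D-D$ is effective because $\pi^*\bar D$ is effective and $D$ contains no exceptional divisor), so you have correctly located the one real difficulty of the lemma; but the normalization-and-descent argument you propose does not settle it.
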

\begin{proof}
Consider the divisor $\bar D = \pi_*D$, that is, the 1-codimensional part of the image of $D$. Since $\bar D$ is effective, its pullback to $Y$ is also effective and thus there is an effective $\pi$-exceptional divisor $\Sigma$ such that 
\[ \pi^*\bar D = \pi^*\pi_*D = D+\Sigma,\]
because $D$ does not contain components of $\Exc(\pi)$.
Setting $L = \ko_X(\bar D)$ our claim follows.
\end{proof}

Having no non-constant meromorphic functions has some consequences which will turn out to be useful later:
\begin{proposition}\label{prop: wedging sections}
 Let $X$ be a compact complex manifold of algebraic dimension $a(X) =0$ and let $\ke$ be a holomorphic vector bundle of rank $r$ on $X$. If $s_1, \dots, s_k\in H^0(X, \ke)$ are linearly independent (over $\IC$)  then $s_1\wedge \dots \wedge s_k$ is a non-zero holomorphic section of $\Wedge^k\ke$.

 In particular, 
 \[ h^0\left(X, \Wedge^k \ke\right)\geq k\left( h^0(X, \ke)-k\right)+1\]
 and thus $h^0(X, \ke)\leq \rk(\ke)$. 
 \end{proposition}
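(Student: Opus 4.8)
The plan is to reduce everything to the first assertion — that a wedge of $\C$-linearly independent global sections is nonzero — and then to deduce the two numerical statements formally. For the first assertion I would argue by induction on $k$, the case $k=1$ being exactly the meaning of linear independence (a nonzero section is nonzero). Throughout I use that $X$ is connected, which is implicit once $a(X)$ is defined, so that a nonzero section vanishes only on a proper analytic subset.

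For the inductive step, assume the claim for $k-1$ and let $s_1,\dots,s_k\in H^0(X,\ke)$ be linearly independent. By the inductive hypothesis $\omega:=s_1\wedge\cdots\wedge s_{k-1}$ is a nonzero section of $\Wedge^{k-1}\ke$, hence nonzero on a dense open set $U\subseteq X$. Suppose toward a contradiction that $s_1\wedge\cdots\wedge s_k\equiv 0$. At each $x\in U$ the vectors $s_1(x),\dots,s_{k-1}(x)$ are linearly independent, while the vanishing of the full wedge forces $s_k(x)\in\operatorname{span}(s_1(x),\dots,s_{k-1}(x))$; writing $s_k=\sum_{i=1}^{k-1}\lambda_i s_i$ on $U$ and letting $\omega_i$ be the section obtained from $\omega$ by replacing its factor $s_i$ with $s_k$, antisymmetry of the wedge gives $\omega_i=\lambda_i\,\omega$. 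This is Cramer's rule in disguise and pins down each coefficient $\lambda_i$.

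The main obstacle is to verify that the $\lambda_i$ are genuine meromorphic functions on $X$, since $\Wedge^{k-1}\ke$ has rank $>1$ and one cannot simply divide one section by another. I would resolve this locally: in a holomorphic frame of $\Wedge^{k-1}\ke$ the identity $\omega_i=\lambda_i\,\omega$ reads $\omega_i^{(a)}=\lambda_i\,\omega^{(a)}$ componentwise, so choosing any component $a$ with $\omega^{(a)}\not\equiv 0$ exhibits $\lambda_i=\omega_i^{(a)}/\omega^{(a)}$ as a local quotient of holomorphic functions; distinct choices agree on $U$ and hence glue to a well-defined element of $\sM(X)$. Now $a(X)=0$ forces each $\lambda_i$ to be a constant $c_i\in\C$, and then $s_k-\sum_i c_i s_i$ is a holomorphic section vanishing on the dense set $U$, so it vanishes identically — contradicting the linear independence of $s_1,\dots,s_k$. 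This closes the induction.

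For the numerical statements, set $V=H^0(X,\ke)$, $n=\dim_\C V=h^0(X,\ke)$, and consider the linear map $\Phi\colon\Wedge^k V\to H^0(X,\Wedge^k\ke)$ sending a wedge of sections to the corresponding section. The first part says precisely that $\Phi$ is nonzero on every nonzero decomposable tensor, i.e.\ $\IP(\ker\Phi)$ is disjoint from the Grassmannian $\operatorname{Gr}(k,n)$ in its Pl\"ucker embedding inside $\IP(\Wedge^k V)=\IP^{\binom{n}{k}-1}$. Since $\operatorname{Gr}(k,n)$ is irreducible of dimension $k(n-k)$, the projective dimension theorem (two subvarieties of $\IP^N$ whose dimensions sum to at least $N$ must meet) gives $\dim\IP(\ker\Phi)+k(n-k)\le\binom{n}{k}-2$, whence $\dim\img\Phi=\binom{n}{k}-\dim\ker\Phi\ge k(n-k)+1$; as $\img\Phi\subseteq H^0(X,\Wedge^k\ke)$ this is the claimed inequality. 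Finally, taking $k=r+1$ and using $\Wedge^{r+1}\ke=0$, the first part rules out $r+1$ linearly independent sections, so $h^0(X,\ke)\le r=\rk(\ke)$.
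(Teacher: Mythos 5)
Your proof is correct and takes essentially the same route as the paper: vanishing of the wedge produces, via Cramer's rule, meromorphic coefficient functions that glue globally and must be constant because $a(X)=0$ forces $\sM(X)=\C$, and the numerical bound comes from the Pl\"ucker-embedded Grassmannian avoiding the kernel of the wedge map $\Wedge^k H^0(X,\ke)\to H^0(X,\Wedge^k\ke)$. The only cosmetic differences are that you run the induction on the statement itself (using only $s_1\wedge\dots\wedge s_{k-1}\neq 0$) where the paper reduces to the case that all $(k-1)$-fold subwedges are nonzero, and that you cite the projective dimension theorem where the paper phrases the same count via the indeterminacy locus of a linear projection.
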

\begin{proof}
 By induction we may assume that for all $j=1, \dots, k$ the wedge products $s_1\wedge \dots \wedge \widehat s_j \wedge \dots \wedge s_k\neq 0$, that is, all subsets of $k-1$ of the sections are linearly independent in the fibre at the general point of $X$. 
 
 Assume $s_1\wedge \dots \wedge s_k=0$ in $H^0\left(X, \Wedge^k \ke\right)$.
 Then in every local trivialization $\ke|_U\isom \ko_U^{\oplus r}$ over a connected open subset $U\subset X$ we can solve the system of linear equations over the field $\km(U)$ of meromorphic functions on $U$ to get a unique linear relation
 \[s_k = \sum _{i=1}^{k-1} \lambda_i^U s_i\]
 with $\lambda^U_i\in \km(U)$. 
 By uniqueness these local linear relations extend to a global linear relation over the field $\km(X)$. However, since $a(X) =0$ we have $\km(X) = \IC$ and thus  the $s_i$ are not $\IC$-linearly independent --- a contradiction.
 
To get the dimension estimate we interpret decomposable elements of the form $s_1\wedge \dots\wedge s_k\in \Wedge ^k H^0(X, \ke)$ as representatives of elements in the Grassmannian $\mathrm{Gr}(k, h^0(X, \ke))$ of $k$-planes in $H^0(X, \ke)$ in its Pl\"ucker embedding. 
Then the above says that the indeterminacy locus of the linear projection 
\[\IP\left( \Wedge^k H^0(X, \ke)\right) \dashrightarrow \IP\left(H^0\left(X, \Wedge^k \ke\right)\right)\]
does not intersect $\mathrm{Gr}(k, h^0(X, \ke))$, so that in particular 
\[ h^0\left(X, \Wedge^k \ke\right)-1\geq \dim  \mathrm{Gr}(k, h^0(X, \ke)) = k\left(( h^0(X, \ke)-k\right).\]
Substituting $k = \rk(\ke)+1$ gives the last estimate. 
\end{proof}

\subsection{Sheaf cohomology and higher direct images}\label{sec: sheaf cohomology}

The obstruction to globalise a local construction on a compact complex manifold $X$  can usually be found in the appropriate sheaf cohomology group. For an introduction to these important techniques we refer to \cite{Ha}, but we will briefly discuss higher direct image sheaves, which will make a prominent appearance later on.
Let $f\colon Y\to X$ be a proper holomorphic map and $\kf$ a sheaf of abelian groups on $Y$. Then we define the $i$-th higher direct image sheaf $R^if_*\kf$ to be the sheaf on $X$ associated to the presheaf
\[ U\mapsto H^i(\inverse f U, \kf).\]
Even though this is not always true, one would like to imagine this to be the sheaf on $X$, whose fibre at $x$ is the cohomology of $\kf$ restricted to $\inverse f(x)$ and this can be made precise by the base change theorem \cite{BS}. In addition, these sheaves measure the failure of $f_*$ to be exact in the sense that if 
\[ 0\to \kf' \to \kf \to \kf''\to 0\]
is an exact sequence of sheaves on $Y$ then we get a long exact sequence
\[ 0 \to f_*\kf'\to f_*\kf \to f_*\kf''\to R^1f_*\kf'\to R^1f_*\kf \to \dots\]
of sheaves on $X$. 
This tool is often used to compute cohomology groups because of the Leray spectral sequence
\[ E_2^{p,q} = H^q(X, R^pf_*\kf) \implies H^{p+q}(Y, \kf).\]
As an example, one might want to consider the sheaf $\IZ_Y$ on the total space of a fibre bundle and reprove the Leray-Hirsch Theorem with this machinery. 

\begin{remark}\label{remark coherent sheaves}
 A particularly important class of sheaves on a complex manifold $X$ are coherent sheaves, which are roughly all sheaves that can be written as cokernels of an $\sO_X$-linear map of locally free sheaves. 
 These have the useful property that they have no cohomology in degree higher than the dimension of $X$, which implies that higher pushforward sheaves are zero above the maximal dimension of a fibre. 
\end{remark}

\subsection{The Riemann-Roch theorem} 
We recall from the theory of characteristic classes (see for example \cite[Appendix A.4]{Ha} or \cite{maurizio}) the following expressions for the Todd class and the Chern character of a complex vector bundle: 
\begin{equation}\label{eq chern classes}
\begin{aligned}
 \td  & = 1 + \frac{c_1}{2}+\frac{c_1^2+c_2}{12}+ \frac{c_1c_2}{24}+\ldots \\
 \ch & = \rk E+c_1 + \frac{c_1^2-2c_2}{2} + \frac{c_1^3-3c_1c_2+3c_3}{6} + \ldots .  
\end{aligned}
\end{equation}

What follows is rather standard for smooth projective varieties, but we did not find a comprehensive treatment for compact complex manifolds in the literature which is why we wanted to provide some references.
An important tool is the Hirzebruch-Riemann-Roch theorem which states that for a vector bundle or a coherent sheaf $F$ on a compact complex manifold $X$ of dimension $n$ the holomorphic Euler characteristic $\chi(F):=\sum_{i=0}^n\dim H^i(X,F)$ can be expressed in terms of the Chern character of $F$ and the Todd class of $X$ as follows:
\begin{equation}\label{eq hrr formula}
\chi(F)=\int_X\ch(F)\td(X).
\end{equation}
The formula has first been proven for a smooth projective variety $X$. It has been proven in the analytic context first for characteristic classes in the Hodge cohomology ring $\bigoplus_{p=0}^nH^p(X,\Omega_X^p)$ by \cite{OBTT}. A version in K-theory for complex spaces has been proven in \cite{L} from which the singular cohomology version can be deduced as for smooth projective varieties, see \cite{BFMP2}.

The Hirzebruch-Riemann-Roch formula is used to connect holomorphic information with topological information. This is also manifested in the Gau\ss-Bonnet theorem, which links the topological Euler characteristic of a compact complex manifold $X$ to its top Chern class.
\begin{equation}\label{eq gauss bonnet}
e(X)=\int_Xc_n(X).
\end{equation}

It is well known how to deduce \eqref{eq gauss bonnet} from Hirzebruch-Riemann-Roch, see for example Huybrechts' book on complex geometry \cite[Corollary 5.1.4]{Hu}. We obtain
\[
\int_X c_n(X) = \sum_{p= 0}^n (-1)^{p}\chi(\Omega_X^p) = \sum_{i=0}^n (-1)^i b_i(X) = e(X).
\]
The first equality can also be obtained directly from the Borel-Serre formula, see \cite[Example 3.2.5]{fulton}, and the Hirzebruch-Riemann-Roch theorem. Then we use that the holomorphic de Rham complex is quasi-isomorphic to the constant sheaf $\C$. Note that it is not necessary that the spectral sequence $E_1^{pq}=H^q(X,\Omega_X^p) \implies H^{p+q}(X,\C)$ degenerates at $E_1$. The Euler characteristic of the $E_1$-term of a spectral sequence is always equal to the Euler characteristic of the abutment. We observe that by compactness of $X$ all vector spaces are finite dimensional.

\subsection{Picard variety} \label{subsection picard variety}

Let us recall that on a complex manifold $X$  a holomorphic line bundle can be described by a holomorphic cocycle and that up to isomorphism holomorphic line bundles are classified by the (\v{C}ech)-cohomology group $\Pic(X) = H^1(X, \ko_X^\times)$. The exponential sequence of sheaves
\[
0\to \Z_X \to \sO_X \to[\exp(2\pi i \cdot ) ] \sO_X^\times \to 0
\]
induces an exact sequence
\[
H^1(X,\Z) \to H^1(X,\sO_X) \to \Pic(X)=H^1(X,\sO_X^\times) \to[c_1] H^2(X,\Z) \to \ldots,
\]
where $c_1$ maps the isomorphism class of a line bundle to its first Chern class. 
The Picard variety of degree $0$-line bundles on $X$ is thus $\Pic^0(X)=\ker c_1=H^1(X,\sO_X)/H^1(X,\Z).$ To endow $\Pic^0(X)$ with the structure of a complex manifold we need the following Lemma, for which we give a proof since we could not track down a reference in the literature.

\begin{lemma}\label{lemma pic zero}
For every compact complex manifold the group $H^1(X,\Z)$ is discrete in $H^1(X,\sO_X)$. In particular, $\Pic^0(X)$ is a connected Hausdorff complex manifold.
\end{lemma}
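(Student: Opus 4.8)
The plan is to split the statement into two parts: the genuine content, that the subgroup $H^1(X,\Z)$ is discrete in $H^1(X,\sO_X)$, and then the soft deduction that the quotient is a connected Hausdorff complex manifold. For the discreteness I would rely on two inputs: the finite generation of $H^1(X,\Z)$ coming from compactness, and the injectivity of a real-linear comparison map, the latter being where a maximum-principle argument enters.

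First I would record that $H^1(X,\Z)$ is a finitely generated (in fact free) abelian group, since $X$ is compact. Reading the long exact sequence of the exponential sequence, the connecting map $H^0(X,\sO_X^\times)=\C^\times\to H^1(X,\Z)$ vanishes (because $\exp\colon\C\to\C^\times$ is already surjective on global sections), so $H^1(X,\Z)\to H^1(X,\sO_X)$ is injective and its image $\Lambda$ is a finitely generated subgroup isomorphic to $H^1(X,\Z)$. Now a finitely generated subgroup of a finite-dimensional real vector space is discrete exactly when a $\Z$-basis maps to $\R$-linearly independent vectors; since $\Lambda\otimes_\Z\R=H^1(X,\R)$, this is equivalent to injectivity of the induced $\R$-linear map $\psi\colon H^1(X,\R)\to H^1(X,\sO_X)$, that is, of the composite $H^1(X,\R)\to H^1(X,\C)\to H^1(X,\sO_X)$ whose second arrow is induced by the sheaf inclusion $\C_X\hookrightarrow\sO_X$. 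Thus the whole problem reduces to showing $\psi$ is injective.

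To analyze $\psi$ I would use the Dolbeault description, under which $\psi$ sends the class of a real $d$-closed $1$-form $\alpha$ to the $\bar\partial$-class of its $(0,1)$-part $\alpha^{0,1}$. Suppose $[\alpha]\in\ker\psi$, so that $\alpha^{0,1}=\bar\partial g$ for some smooth $g$; reality of $\alpha$ forces $\alpha^{1,0}=\overline{\alpha^{0,1}}=\partial\bar g$. A short computation using $d\alpha=0$ gives $\partial\bar\partial(g-\bar g)=0$, so the real function $b:=\Im g$ satisfies $\partial\bar\partial b=0$, and one finds that $\alpha$ differs from an exact form by a multiple of $\Im(\partial b)$; moreover $\eta:=\partial b$ is a $d$-closed holomorphic $1$-form. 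Hence $[\alpha]=0$ will follow once $\eta=0$, i.e. once $b$ is constant. The hard part is precisely this claim: a real smooth function $b$ on a compact complex manifold with $\partial\bar\partial b=0$ is constant. Here I would argue locally. Since $\eta=\partial b$ is a closed holomorphic $1$-form, the holomorphic Poincaré lemma provides on a ball a holomorphic $h$ with $\partial b=\partial h$, whence $b$ is, up to an additive constant, the real part of a holomorphic function. Real parts of holomorphic functions are harmonic and obey the maximum principle, so the locus where $b$ attains its global maximum is open; being also closed, it is all of $X$ by connectedness, and $b$ is constant. This gives $\eta=0$, hence $\ker\psi=0$, which is the desired discreteness.

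Finally, for the \emph{in particular} statement, $H^1(X,\sO_X)\cong\C^m$ is a connected complex Lie group under addition, and by the exponential sequence $\Pic^0(X)=H^1(X,\sO_X)/H^1(X,\Z)$ is its quotient by the subgroup just shown to be discrete. A quotient of a connected (complex) Lie group by a discrete subgroup is a connected Hausdorff manifold that inherits the complex structure, so $\Pic^0(X)$ is a connected Hausdorff complex manifold.
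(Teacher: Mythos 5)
Your proof is correct, and it shares its skeleton with the paper's: both arguments reduce discreteness of $H^1(X,\Z)$ to the injectivity of the real comparison map $\psi\colon H^1(X,\R)\to H^1(X,\sO_X)$. Where you diverge is in how this injectivity is established. The paper uses the short exact sequence $0\to\C\to\sO_X\to d\sO_X\to 0$ (with $d\sO_X$ the sheaf of locally exact holomorphic $1$-forms), which identifies the kernel of $H^1(X,\C)\to H^1(X,\sO_X)$ with the image of the space of closed holomorphic $1$-forms, and then cites Lemma 2.2 of Catanese for the statement that this subspace meets its complex conjugate only in $0$; since a real class in the kernel lies in that intersection, injectivity on $H^1(X,\R)$ follows. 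You instead prove the needed statement from scratch: representing a real class in $\ker\psi$ by a closed real $1$-form $\alpha$ with $\alpha^{0,1}=\bar\partial g$, you show $\alpha$ equals an exact form plus $2\Im(\partial b)$ with $b=\Im g$ satisfying $\partial\bar\partial b=0$, and you then kill $\partial b$ by observing that such a $b$ is locally the real part of a holomorphic function and hence, by the maximum principle and compactness plus connectedness, constant. This is in substance a self-contained proof of exactly the special case of Catanese's lemma that the paper invokes (the classical proof of that lemma runs along the same lines). The trade-off: the paper's argument is shorter but outsources the analytic core to a reference, while yours is longer but elementary and complete; you also make explicit two points the paper treats as immediate, namely the lattice criterion for discreteness (together with injectivity of $H^1(X,\Z)\to H^1(X,\sO_X)$ via the exponential sequence) and the fact that the quotient of $H^1(X,\sO_X)$ by a discrete subgroup is a connected Hausdorff complex manifold.
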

\begin{proof}
We consider the short exact sequence $0 \to \C \to \sO_X \to d\sO_X \to 0$ of sheaves on $X$ where $d\sO_X \subset \Omega_X$ is the subsheaf of locally exact holomorphic $1$-forms. We see from this that 
$$ H^1(X,\C)/H^1(X,d\sO_X) \subset H^1(\sO_X).$$
We have $H^1(X,d\sO_X) \cap \overline{H^1(X,d\sO_X) } =0$ by Lemma 2.2 of \cite{Catanese}. Therefore, $H^1(X,\R)$ embeds into $H^1(X,\C)/H^1(X,d\sO_X)$ and thus $H^1(X,\Z)$ is discrete in $H^1(X,\sO_X)$.
Finally, $\Pic^0(X)$ is connected as it is the image of $H^1(X,\sO_X)$.
\end{proof}

For a compact K\"ahler manifold $X$, Hodge theory tells us that $\Pic^0(X)$ is compact, as $b_1(X) = 2h^1(X, \ko_X)$. This need no longer be the case for compact complex manifolds: if for example the first Betti number is odd, the $H^1(X, \IZ)$ can never  map to a lattice of full rank in $H^1(X, \ko_X)$. We give a classical example where this occurs.

\begin{example}\label{ex: Hopf}
Let $X$ be a Hopf surface defined as $X= \left(\C^2\ohnenull\right) / G$ where $G$ is the infinite cyclic group generated by a contraction $\gamma$. Kodaira has observed in section 10 of \cite{kodaira123} that by work of Latt\`es \cite{lattes} and Sternberg \cite{sternberg} such an automorphism $\gamma$ of $\C^2\ohnenull$ can be brought into a normal form given by $(z_1,z_2) \mapsto (\alpha_1 z_1 + \lambda z_2^n,\alpha_2 z_2)$ where $\alpha_1, \alpha_2, \lambda \in \C$ with $0 < \abs{\alpha_1}\leq \abs{\alpha_2} < 1$ and $\lambda =0$ unless $\alpha_1 = \alpha_2^n$. One easily sees that a Hopf surface $X$ is diffeomorphic to $\IS^1\times \IS^3$ so that $H^1(X,\Z)=\Z$ and $H^2(X,\Z)=0$. In fact, Kodaira \cite{kodaira} has shown that all complex structures on $\IS^1\times \IS^3$ come from Hopf surfaces.
It can also be shown that $H^1(X,\sO_X)=\C$, see \cite[V.18]{BHPV}. The exponential sequence therefore gives us an exact sequence 
\[
0 \to H^1(X,\Z) \to H^1(X,\sO_X) \to H^1(X,\sO_X^\times) \to 0
\]
so that $\Pic^0(X) = \Pic(X) \isom \C^\times$.
\end{example}

\begin{remark}\label{remark universal line bundle}
From the exponential sequence it is straightforward to construct a universal line bundle $\scrL$ on $\Pic^0(X) \times X$ with the property that for all isomorphism classes of line bundles $[L]\in \Pic^0(X)$ we have $\scrL\vert_{\{[L]\}\times X} \isom L$ where we identified $\{[L]\}\times X$ with $X$ via the second projection.

Let $V=H^1(X,\sO_X)$, let $\pi\colon V \times X \to X$ be the second projection and consider the tautological section 
\[
\tau \in H^0(V \times X,\ko_{V \times X}\tensor H^1(X,\ko_X) )  = H^0(V,\ko_{V})\tensor H^1(X,\ko_X)
\]
given by $\tau(\alpha, x) = \alpha$. If we consider $\tau$ as an element of $H^1(V \times X,\ko_{V \times X})$ via the K\"unneth decomposition and then apply the exponential map we get a class $\exp(2\pi i\tau)\in H^1(V \times X, \ko_{V \times X}^\times)$ which defines a line bundle on $V \times X$. By construction, this line bundle descends to the universal line bundle $\scrL$ on $\Pic^0(X)\times X$. 
\end{remark}

\section{The algebraic dimension of a hypothetical complex \texorpdfstring{$\csphere$}{S6}}\label{sec main theorem}
{ 
In \cite{CDP} Campana, Demailly, and Peternell proved, as an application of a more general result, that a hypothetical complex $\csphere$ has no meromorphic functions, that is, has algebraic dimension zero. 
However, we were notified by Thomas Peternell, that the proof has a gap. We will explain the gap  in Remark \ref{rem: gap}. 
He also notified us, that a corrigendum is in preparation and,  while the more general result has to be weakened, the conclusion for the hypothetical complex $\csphere$ still stands. 
}

{In this section, our goal is more modest: we will give a general vanishing result for threefolds with $b_2=0$ admitting non-holomorphic, meromorphic maps to a curve following the original strategy of \cite{CDP} and explanations by Thomas Peternell. This will exclude the existence of such functions on a hypothetical complex $\csphere$.}

\begin{theorem}\label{theorem main2 new}
Let $X$ be a compact complex manifold with $\dim_\C X=3$, $b_2(X)=0$, and assume that $X$ admits a meromorphic, non-holomorphic map  $f\colon X\dashrightarrow C$  onto a curve. Then for a holomorphic vector bundle $E$ on $X$ we have
\begin{enumerate}
	\item $H^i(X,E\tensor L) = 0$ for all $i \geq 0$ and for a holomorphic line bundle $L$ on $X$ which is generic\footnote{In this context, \emph{generic} means that the statement applies to $L$ contained in the complement of some nowhere dense analytic subset of $\Pic^0(X)$.}   in $\Pic^0(X)$.
	\item $\chi(E\tensor L) = 0$ for every holomorphic line bundle $L$ on $X$ in $\Pic^0(X)$.
	\item $c_3(X)=0$.
\end{enumerate}
\end{theorem}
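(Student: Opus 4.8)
I would prove the three assertions in the order (1), then (2), then (3), since the last two follow formally from the first.

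\medskip

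\noindent\emph{The deductions $(1)\Rightarrow(2)\Rightarrow(3)$.} Since $b_2(X)=0$ we have $H^2(X,\IQ)=0$, so $c_1(L)=0$ in rational cohomology for \emph{every} $L\in\Pic(X)$; hence $\ch(E\tensor L)=\ch(E)$ rationally, and by the Riemann--Roch formula \eqref{eq hrr formula} the number $\chi(E\tensor L)=\int_X\ch(E)\td(X)=\chi(E)$ is independent of $L$. Granting (1), a generic $L_0$ satisfies $\chi(E\tensor L_0)=0$, so $\chi(E\tensor L)=0$ for all $L\in\Pic^0(X)$; this is (2). Applying (2) to $E=T_X$ and $L=\ko_X$ gives $\chi(T_X)=0$. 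Using once more that $c_1(X)=0$ rationally, the degree-three part of $\ch(T_X)\td(X)$ computed from \eqref{eq chern classes} collapses to $\tfrac12 c_3(X)$, so by \eqref{eq hrr formula} and Gau\ss--Bonnet \eqref{eq gauss bonnet} one gets $0=\chi(T_X)=\tfrac12\int_Xc_3(X)=\tfrac12 e(X)$. As $X$ is a connected compact oriented $6$-manifold, integration is an isomorphism $H^6(X,\IZ)\cong\IZ$, so $\int_Xc_3(X)=0$ forces $c_3(X)=0$, which is (3).

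\medskip

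\noindent\emph{Setup for (1).} By upper semicontinuity of $h^i$ in the family parametrised by the universal line bundle $\scrL$ on $\Pic^0(X)\times X$ (Remark \ref{remark universal line bundle}), each locus $\{L:h^i(X,E\tensor L)>0\}$ is closed analytic; since $\Pic^0(X)$ is connected (Lemma \ref{lemma pic zero}) it therefore suffices to exhibit a single $L_0\in\Pic^0(X)$ with $H^i(X,E\tensor L_0)=0$ for all $i$. To access such an $L_0$ I would use the fibration. Choose a resolution of indeterminacy $\pi\colon Y\to X$ of $f$ with $Y$ a compact complex manifold and, after Stein factorisation, a holomorphic map $g\colon Y\to C$ with connected fibres onto a smooth compact curve $C$. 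Because $f$ is \emph{not} holomorphic, the exceptional divisor $\Sigma$ of $\pi$ is nonempty. Since $X$ is smooth it has rational singularities, so $R\pi_*\ko_Y=\ko_X$, and the projection formula gives $H^i(X,E\tensor L)\iso H^i(Y,\pi^*E\tensor\pi^*L)$ for every $L$. Finally, as $\dim C=1$ the Leray spectral sequence for $g$ has only the columns $p=0,1$, so the vanishing on $Y$ reduces to that of $H^0(C,R^qg_*(\pi^*E\tensor\pi^*L))$ and $H^1(C,R^qg_*(\pi^*E\tensor\pi^*L))$ for all $q$.

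\medskip

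\noindent\emph{The link between $C$ and $\Pic^0(X)$.} This is supplied by Lemma \ref{lemma pullback} together with $b_2(X)=0$. For a general $c\in C$ the fibre $F=g^{-1}(c)$ is an effective divisor meeting $\Exc(\pi)$ only in lower-dimensional loci, so $\ko_Y(F)=\pi^*\ko_X(\pi_*F)\tensor\ko_Y(-\Sigma_F)$ for an effective exceptional $\Sigma_F$; and since $b_2(X)=0$ the class $L_c:=\ko_X(\pi_*F)$ lies in $\Pic^0(X)$ up to torsion and moves with $c$. Thus $g^*\ko_C(c)$ differs from the pullback of an element of $\Pic^0(X)$ only by a divisor supported on $\Exc(\pi)$, and this is what lets one trade a twist along the fibres of $g$ for a twist by a line bundle coming from $\Pic^0(X)$. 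This is also the structural feature that makes $\Pic^0(X)$ positive-dimensional, so that the word ``generic'' in (1) has content.

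\medskip

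\noindent\emph{The main obstacle.} The hard part is to turn this bookkeeping into \emph{simultaneous} vanishing of all groups $H^p(C,R^qg_*(\pi^*E\tensor\pi^*L_0))$ for a suitable $L_0$. One cannot merely push the twist to high degree along $C$: that makes $H^0$ grow rather than vanish, while a purely fibrewise argument is obstructed by the constant Euler characteristic $\chi(F,E|_F)$. What is required is a careful analysis of the higher direct images $R^qg_*$ and of the correction terms supported on $\Sigma$, according to how the components of $\Exc(\pi)$ meet the fibres of $g$ --- whether they dominate $C$ or are contracted into single fibres. This is exactly where \cite{CDP} needed an extra hypothesis and where the corrigendum \cite{CDP19} splits the problem into three cases. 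I would follow the case in which $\Exc(\pi)$ interacts most mildly with $g$, so that the direct images are locally free of the expected rank and, after the $\Pic^0(X)$-twist described above, the vanishing of $H^1(C,-)$ can be matched against that of $H^0(C,-)$; this produces the required $L_0$ and hence the generic statement (1).
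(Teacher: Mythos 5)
Your deductions (1)$\Rightarrow$(2)$\Rightarrow$(3) are correct, and for (2) you take a genuinely different (and slightly slicker) route than the paper: you observe that $b_2(X)=0$ makes $\ch(L)=1$ in rational cohomology, so $\chi(E\tensor L)$ is independent of $L$ by Hirzebruch--Riemann--Roch \eqref{eq hrr formula}, whereas the paper obtains the same constancy from flatness of the universal family over the connected manifold $\Pic^0(X)$; both are fine. Your semicontinuity setup for (1) also matches the paper's Step 1, except for two points: it suffices to find, for each $i$ \emph{separately}, one $L_i$ with $H^i(X,E\tensor L_i)=0$ (finitely many dense open sets intersect in a dense open set), rather than a single $L_0$ killing all cohomology at once, and Serre duality reduces the work to $i=0,2$. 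The case $i=0$ then has a short self-contained proof --- twist by a negative multiple of a fibre divisor, using Lemmas \ref{lem: power in Pic0} and \ref{lem: vanishing by twisting} --- which your outline never supplies.

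The genuine gap is that the heart of the theorem, the vanishing of $H^2(X,E\tensor L)$ for some $L\in\Pic^0(X)$, is not proved but only labelled ``the main obstacle'', and the route you propose for it would fail. You say you would follow ``the case in which the direct images are locally free of the expected rank''. First, that is not a case of the theorem: the only hypothesis is that $f$ is meromorphic and non-holomorphic, and nothing forces the sheaves $R^qg_*$ to be locally free. Second, assuming such local freeness is exactly the unjustified claim (Lemma 1.5 of \cite{CDP}) whose failure created the gap in the original paper; Remark \ref{rem: gap} gives a counterexample and points out that if local freeness held, checking the vanishing \eqref{eq: last computation} on the general fibre would indeed suffice --- that is precisely the shortcut one is \emph{not} allowed to take. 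The paper's actual argument avoids the assumption entirely: non-holomorphy of $f$ forces every component of $\Sigma$ to dominate $C$; the twist $L=L_0^{m+k}\tensor M$ is chosen so that tensoring with $\ko_Y(-k\Sigma)$ converts $\pi^*L_0^{k}$ into $g^*A^{k}$ (equation \eqref{eq geradenbuendel2}); the group $H^2(X,E\tensor L)$ is compared with $H^2(Y,\pi^*(E\tensor L)(-k\Sigma))$ through an injectivity/surjectivity argument resting on Lemma \ref{lemma higher direct images}; the $p=1$ Leray term \eqref{eq leray vanishing} is killed by Serre vanishing on $C$, and the $p=0$ term by proving $R^2g_*\bigl(\pi^*(E\tensor L_0^{\tensor m}\tensor M)\bigr)=0$ via base change, i.e.\ by verifying the dual $H^0$-vanishing \eqref{eq: last computation} on \emph{every} fibre --- including the finitely many reducible ones, which is precisely what the auxiliary divisor $M=\ko_X(n_0F_0+\cdots+n_dF_d)$, built from the incidence graph of fibre components, and the Claim \eqref{eq m vanishing} are designed to handle. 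None of this machinery, nor any substitute for it, appears in your proposal, so statement (1), and with it the theorem, remains unproved.
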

The Gau\ss-Bonnet formula \eqref{eq gauss bonnet} implies immediately.
\begin{corollary}[Campana--Demailly--Peternell]\label{cor: partial a(S6)=0}
Let $\csphere$ be a compact complex manifold whose underlying topological space is homeomorphic to the $6$-sphere $\dsphere$, then $X$ does not admit  a meromorphic, non-holomorphic map  $f\colon X\dashrightarrow C$  onto a curve.

In particular, if $a(X)>0$, then every meromorphic function on $X$ extends to a holomorphic map to $\IP^1$. 
\end{corollary}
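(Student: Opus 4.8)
The plan is to prove the three assertions in the order stated, since each feeds into the next, and then read off the corollary. The conceptual heart is assertion (1): I want to produce a whole positive-dimensional family of line bundles $L \in \Pic^0(X)$ for which $E \tensor L$ has no cohomology in any degree. The strategy, following the original approach of \cite{CDP}, is to exploit the meromorphic map $f\colon X \dashrightarrow C$. First I would pass to a resolution of indeterminacy $\pi\colon Y \to X$ so that $g = f \circ \pi\colon Y \to C$ is an honest holomorphic map to the smooth curve $C$; after Stein factorization I may assume $g$ has connected fibres. The key structural input is that $b_2(X) = 0$, which forces $\Pic^0(X)$ to be large relative to the algebraic constraints: because $H^2(X,\Z)$ controls $c_1$, the restriction $c_1 = 0$ on all of $\Pic(X)$ means that $\Pic(X) = \Pic^0(X) = H^1(X,\sO_X)/H^1(X,\Z)$ is genuinely positive-dimensional as soon as $h^1(X,\sO_X) > 0$, and this is where the non-holomorphicity of $f$ (equivalently, the nontrivial exceptional locus of $\pi$) must be leveraged to guarantee $h^1(X,\sO_X) \neq 0$.

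For assertion (1), the mechanism I would use is the universal line bundle $\scrL$ on $\Pic^0(X) \times X$ from Remark \ref{remark universal line bundle} together with a semicontinuity and base-change argument. Consider the family $E \tensor \scrL$ over $\Pic^0(X)$. The idea is that for generic $L$, pulling back to $Y$ and pushing forward along $g$ produces higher direct image sheaves $R^q g_*(\pi^* (E \tensor L))$ on the curve $C$ whose degrees can be controlled; when $L$ is twisted by a generic element of the positive-dimensional group $\Pic^0(X)$, these pushforwards acquire sufficiently negative degree on $C$ to kill all their cohomology, and then the Leray spectral sequence for $g$ forces $H^i(Y, \pi^*(E \tensor L)) = 0$ for all $i$. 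Comparing cohomology on $Y$ and $X$ via $\pi_* \sO_Y = \sO_X$ (and the fact that $\pi$ is a proper modification, using Lemma \ref{lemma pullback} to track exceptional corrections) then yields $H^i(X, E \tensor L) = 0$. The generic locus is the complement of the nowhere dense analytic subset where the relevant degrees fail to be negative.

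Assertion (2) is then essentially formal: $\chi(E \tensor L)$ is a topological invariant by Hirzebruch-Riemann-Roch \eqref{eq hrr formula}, and since $c_1(L) = 0$ for every $L \in \Pic^0(X)$, the Chern character $\ch(L) = 1$, so $\chi(E \tensor L)$ is independent of $L$. Since assertion (1) gives $\chi(E \tensor L) = \sum_i (-1)^i h^i(X, E \tensor L) = 0$ for the generic $L$, the constant value must be $0$ for every $L \in \Pic^0(X)$. Assertion (3) follows by specializing: take $E = T_X$ and $L = \sO_X$, so that $\chi(T_X) = 0$; combining this with the parallel computation of $\chi(\sO_X)$ and the other bundles appearing in the Todd/Chern-character expansion \eqref{eq chern classes}, and using $b_2(X) = 0$ (hence $c_1(X)^2$ and $c_2(X)$ contribute in controlled ways), one isolates $\int_X c_3(X)$ and finds it equals a suitable combination of the vanishing Euler characteristics, giving $c_3(X) = 0$. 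The corollary is immediate: by Gau\ss-Bonnet \eqref{eq gauss bonnet}, $e(\csphere) = \int_X c_3(X) = 0$, but $e(\dsphere) = 2 \neq 0$, a contradiction, so no such meromorphic non-holomorphic map to a curve can exist; and since every non-constant meromorphic function yields, via Stein factorization of a resolution of indeterminacy, such a map to a curve, any meromorphic function that does not already extend holomorphically to $\IP^1$ is excluded.

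I expect the main obstacle to be assertion (1), specifically the degree estimate on the higher direct images $R^q g_*$ along the curve and the verification that twisting by a generic $L \in \Pic^0(X)$ genuinely drives these degrees negative uniformly in $q$. This is precisely the point where the argument is delicate — it is the location of the gap reported in Remark \ref{rem: gap} — and the non-holomorphicity hypothesis on $f$, which guarantees that $\pi$ contracts something and thereby that the exceptional geometry contributes the needed negativity, has to be used with care rather than treated as a formality.
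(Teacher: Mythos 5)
Your global architecture matches the paper's: the corollary follows from Theorem \ref{theorem main2 new} and Gau\ss-Bonnet, since $e(\dsphere)=2\neq 0$ while assertion (3) would force $e(\csphere)=\int c_3=0$; the implications (1)$\implies$(2)$\implies$(3) are handled correctly (your Hirzebruch--Riemann--Roch shortcut for (2), using $\ch(L)=1$ because $c_1(L)=0$, is a legitimate alternative to the paper's flat-family argument), and the final deduction about extending meromorphic functions is fine. The problem is assertion (1), which is where essentially all the content of the statement lives, and there your proposal both rests on a mechanism that cannot work and explicitly leaves unproven the one step that constitutes the theorem. First, the mechanism: every $L\in\Pic^0(X)$ has $c_1(L)=0$, so its restriction to any compact curve in $X$ --- in particular to any fibre or fibre component of $g$ --- has degree zero. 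Twisting by a generic element of $\Pic^0(X)$ therefore changes no degrees at all and cannot ``drive the degrees of $R^qg_*$ negative.'' Moreover, even if it could, very negative degree on a curve kills $H^0$ but by Riemann--Roch forces $h^1>0$, so no single ``negativity'' can annihilate all the Leray terms $H^p(C,R^qg_*(\cdot))$ with $p+q=2$ at once. The paper needs two twists of opposite character: a positive one, $g^*A^{\tensor k}$, pulled back from the base so that Serre vanishing kills the $p=1$ term in \eqref{eq leray vanishing}, and twists that are negative along the fibres and the exceptional locus ($-k\Sigma$ and $M^\vee$), whose effect is detected not by degrees but by the vanishing-order Lemma \ref{lem: vanishing by twisting} after Serre duality on each fibre converts $H^2(F,\cdot)$ into an $H^0$, as in \eqref{eq: last computation}. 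These bundles land in $\Pic^0(X)$ only via Lemma \ref{lem: power in Pic0}; their usefulness comes from their geometric origin through Lemma \ref{lemma pullback}, i.e.\ $\pi^*L_0=g^*A\tensor\ko_Y(\Sigma)$ from \eqref{eq geradenbuendel}, not from genericity. Genericity enters only afterwards, through semicontinuity, and separately for $i=0$ (handled by the much easier twist $\ko_X(-kmD)$ by a multiple of a fibre) and $i=2$, with $i=1,3$ recovered by Serre duality on $X$.

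Second, the step you defer --- controlling $R^2g_*\left(\pi^*(E\tensor L_0^{\tensor m}\tensor M)\right)$ --- is precisely the substance of the proof, not a technicality to be used ``with care.'' Since $R^2g_*$ of a vector bundle need not be locally free (Remark \ref{rem: gap}: this is exactly the gap in \cite{CDP}), its vanishing must be verified by base change on \emph{every} fibre of $g$, including the finitely many reducible ones. This is what forces the paper's construction in Step 3b: the graph on the set $\gothF$ of components of reducible fibres, the weight function measuring distance to $F^\Sigma$, the bundle $M=\ko_X(n_0F_0+\dots+n_dF_d)$ with $0\ll n_d\ll\dots\ll n_0\ll k$, and the inductive Claim establishing \eqref{eq m vanishing}, which propagates the vanishing from components meeting $\Sigma$ outward through the whole reducible fibre. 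Your closing paragraph correctly identifies this as ``the main obstacle'' and as the location of the gap, but offers no argument for it; without it (or a substitute), assertion (1), and with it (2), (3), and the corollary, remain unproved. (A smaller inaccuracy: $b_2(X)=0$ does not give $\Pic(X)=\Pic^0(X)$, only that $\Pic(X)/\Pic^0(X)$ embeds into the finite group $H^2(X,\IZ)$ --- this is the content of Lemma \ref{lem: power in Pic0} --- and positive-dimensionality of $\Pic^0(X)$ is nowhere needed as an input; it follows incidentally once an effective divisor with torsion class exists.)
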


For the proof of Theorem \ref{theorem main2 new} we need an auxiliary result. 

\begin{lemma}\label{lemma higher direct images}
Let $X$ be a smooth complex threefold, let  $Y \subset X \times \P^1$ be a closed subspace which is integral,\footnote{irreducible and reduced} and suppose that the projection $\pi\colon  Y \to X$ is bimeromorphic. Then the following holds.
\begin{enumerate}
	\item Let $F$ be a coherent sheaf on $Y$. Then $R^q\pi_* F =0$ for all $q\geq 2$.
	\item $R^q\pi_*\sO_Y = 0$ for $q>0$ und $\pi_*\sO_Y=\sO_X$.
	\item Let $\Sigma \subset Y$ be a closed subspace (not necessarily reduced). Then $R^1 \pi_*\sO_\Sigma=0$.
\end{enumerate} 
\end{lemma}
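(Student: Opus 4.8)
The plan is to move everything off the possibly singular space $Y$ and onto the trivial $\P^1$-bundle $p\colon X\times\P^1\to X$, where the cohomology is completely transparent; the bimeromorphy of $\pi$ will be used only at the very end, to identify $\pi_*\sO_Y$. First I would factor $\pi$ as
$Y \hookrightarrow X\times\P^1 \stackrel{p}{\to} X$,
calling the closed immersion $j$. Since $j$ is a closed immersion we have $R^rj_*=0$ for $r>0$, so the Grothendieck spectral sequence collapses to give $R^q\pi_*\kf \cong R^q p_*(j_*\kf)$ for every sheaf $\kf$ on $Y$, where $j_*\kf$ is simply $\kf$ read as a coherent sheaf on $X\times\P^1$ supported on $Y$. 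Two facts about $p$ then carry all the weight. (A) Every fibre of $p$ is $\P^1$, of dimension $1$, so by the vanishing of higher direct images above the maximal fibre dimension (Remark \ref{remark coherent sheaves}) we have $R^q p_*\kg=0$ for all coherent $\kg$ on $X\times\P^1$ and all $q\geq 2$. (B) By the base change theorem \cite{BS}, applied to the fibrewise cohomology $H^q(\P^1,\sO_{\P^1})$ (which is $\C$ for $q=0$ and $0$ for $q>0$), one gets $p_*\sO_{X\times\P^1}=\sO_X$ and $R^q p_*\sO_{X\times\P^1}=0$ for $q\geq 1$.

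Part (1) is then immediate: the fibres of $\pi$ lie inside $\{x\}\times\P^1$, hence have dimension $\leq 1$, so $R^q\pi_*F=0$ for $q\geq 2$ directly from Remark \ref{remark coherent sheaves} (equivalently, apply (A) to $j_*F$). For the vanishing assertions in (2) and (3) I would run one and the same computation twice. Let $W$ stand for either $Y$ (in (2)) or the subspace $\Sigma$ (in (3)); in the latter case one first notes that $\sO_\Sigma$ pushes forward through the closed immersion $\Sigma\hookrightarrow Y$ with no higher direct images, so that $R^1\pi_*\sO_\Sigma=R^1p_*\sO_\Sigma$ with $\sO_\Sigma$ regarded as a coherent sheaf on $X\times\P^1$. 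Now apply $Rp_*$ to the ideal-sheaf sequence $0\to \sI_W\to \sO_{X\times\P^1}\to \sO_W\to 0$ on $X\times\P^1$. This places $R^1 p_*\sO_W$ between $R^1 p_*\sO_{X\times\P^1}$ and $R^2 p_*\sI_W$, both of which vanish by (B) and (A) respectively. Hence $R^1\pi_*\sO_W=0$, which, combined with (A), completes (2) apart from the computation of $\pi_*\sO_Y$, and completes (3) in full.

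It remains to prove $\pi_*\sO_Y=\sO_X$, the one step where bimeromorphy and the smoothness (hence normality) of $X$ genuinely enter. Here I would pass to the analytic Stein factorization $Y\to X'\to X$ of $\pi$, in which $Y\to X'$ has connected fibres and $X'\to X$ is finite. Since $\pi$ is bimeromorphic, $X'\to X$ is finite and generically biholomorphic; a finite, generically biholomorphic map onto a normal complex space is an isomorphism, and as $X$ is smooth this forces $\pi_*\sO_Y=\sO_X$.

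I expect the genuinely non-formal input to be exactly this last step, since normality (and not just diagram chasing) is needed there; everything else reduces to a long exact sequence once one has passed to $X\times\P^1$. The whole point of working on the ambient product rather than on $Y$ is that $Y$ may be singular and the scheme structure of its fibres hard to control, whereas the trivial bundle $p$ has completely explicit cohomology, which is what makes facts (A) and (B) available.
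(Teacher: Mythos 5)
Your proof is correct, and it reaches the heart of the lemma --- the $R^1$-vanishings in (2) and (3) --- by a genuinely different route than the paper. The paper never leaves $Y$: part (1) is the same fibre-dimension count, but for $R^1\pi_*\sO_Y=0$ it argues fibrewise via Grauert's comparison theorem \cite{grauert}, using that the positive-dimensional fibres of $\pi$ are copies of $\P^1$ with $H^1(\P^1,\sO_{\P^1})=0$, and it then deduces (3) from (1) and (2) through the long exact sequence of $0\to\sO_Y(-\Sigma)\to\sO_Y\to\sO_\Sigma\to 0$ on $Y$. You instead push everything to the trivial bundle $p\colon X\times\P^1\to X$ and obtain (2) and (3) from one uniform squeeze, $0=R^1p_*\sO_{X\times\P^1}\to R^1p_*\sO_W\to R^2p_*\sI_W=0$ with $W=Y$ or $W=\Sigma$. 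This is a real gain in transparency: the comparison theorem computes the completion of $R^1\pi_*\sO_Y$ from the cohomology of \emph{all} infinitesimal neighbourhoods of a fibre, whose graded pieces are coherent sheaves on $\P^1$ that in general need not have vanishing $H^1$; the paper's ``follows as in (1)'' silently uses that $\{x\}\times\P^1$ has trivial conormal bundle inside $X\times\P^1$, i.e., precisely the embedding into the product, which is the input your ideal-sheaf sequence makes explicit. Some such input is indispensable, as the examples following the lemma show that these vanishings fail for general bimeromorphic morphisms of threefolds; your presentation makes it obvious why those examples do not contradict the lemma (their sources are not subspaces of $X\times\P^1$), whereas the paper's fibrewise argument is shorter and stays closer to the geometry of $\pi$ itself. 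Your final step --- $\pi_*\sO_Y=\sO_X$ via Stein factorization, generic injectivity of the finite part, and normality of $X$ --- is exactly the paper's argument. Note also that your vanishing arguments use neither the integrality of $Y$ nor the bimeromorphy of $\pi$; in your rendering these hypotheses enter only in this last identification, so your version of the vanishing statements is marginally more general than stated.
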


\begin{proof}
Ad (1): 
As the fibres of $\pi$ are contained in $\P^1$, the $R^2\pi_*\ko_\Sigma$ vanishes for dimensional reasons. This is a consequence of Grauert's comparison theorem\footnote{In algebraic geometry this statement is known as the theorem on formal functions.} \cite[\S 6, Hauptsatz II a]{grauert}, see also \cite[II. Theorem 4.5 (1)]{SCV7}.

Ad (2): The statement for $q=2$ is a special case of (1). As the exceptional fibres of $\pi$ are all isomorphic to $\P^1$, the claim for $q=1$ follows as in (1) using that $H^1(\P^1,\sO_{\P^1})=0$. The statement $\pi_*\sO_Y = \sO_X$ is a consequence of the existence of a Stein factorization, birationality of $\pi$, and normality of $X$. Finally, (3) is obtained from (1) and (2) by invoking the long exact sequence of higher direct images associated to $0 \to \sO_Y(-\Sigma) \to \sO_Y \to \sO_\Sigma \to 0$.
\end{proof}

The following two examples show that for birational morphisms $\pi: Y \to X$ between compact complex threefolds in general neither $\pi_*\ko_\Sigma  = \ko_{\pi(\Sigma)}$ nor $R^1\pi_*\ko_\Sigma =0$. 

\begin{example}
\begin{enumerate}
	\item Let $X$ be a compact complex threefold containing a smooth curve $C \subset X$. Let $\pi_1\colon X_1 \to X$ be the blow up of $X$ along $C$ and let $\pi_2\colon X_2 \to X_1$ be the blow up of $X_1$ along a smooth curve $C'$ contained in $\Exz(\pi_1)$ such that the induced map $p\colon C' \to C$ is finite of degree $d>1$. Such a curve clearly exists: as $\Exz(\pi_1)$ is projective, any general ample divisor will do. Let $\Sigma=\Exz(\pi_2)$ and let $\pi\colon  \Sigma \to C$ be the composition of $\pi_1 \circ \pi_2$ restricted to $\Sigma$. Then $\pi_* \sO_\Sigma = p_*\sO_C'$ which is a coherent sheaf of rank $d$.
	\item Let $X$ be a compact complex threefold, let $\pi_1\colon X_1\to X$ be the blow up of a smooth point  $p$
with exceptional divisor $E$ and let $\pi_2\colon X_2\to X_1$ the blow up of a
smooth non-rational curve $C$ contained in $E$ with exceptional divisor
$\Sigma$.
Then $R\pi_{2*}\ko_\Sigma = \ko_C$ and
$R(\pi_{1}\circ \pi_{2})_*\ko_\Sigma = R\pi_{1*}\ko_C$ so that $\left(R^1(\pi_1\circ\pi_2)_*\sO_\Sigma\right)_p = H^1(C,\sO_C)\neq 0$.
\end{enumerate}
\end{example}

Every non-zero  holomorphic section of a vector bundle on a manifold can vanishes along a given divisor only to finite order. Based on this idea one gets a vanishing criterion in a more general situation.
\begin{lemma}[\protect{\cite[Prop.\ 1.1]{CDP}}]\label{lem: vanishing by twisting}
 Let $Z$ be a compact complex space, not necessarily reduced, but without embedded components,  and let $D\subset Z$ be an effective Cartier divisor. Assume that $D$ meets every irreducible component of $Z$ non-trivially. 
Then for every holomorphic vector bundle $E$ on $X$ there exists $k_0$ such that 
 $H^0(Z, E(-kD)) = 0$ for $k\geq k_0$.
\end{lemma}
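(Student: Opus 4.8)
The plan is to identify $H^0(Z,E(-kD))$ with the space of sections of $E$ vanishing along $D$ to order at least $k$, and then to show that a section vanishing to \emph{all} orders must be zero; the two hypotheses on $Z$ enter precisely in this final vanishing. Since $D$ is an effective Cartier divisor it is locally cut out by a single non-zero-divisor $g\in\sO_Z$, so multiplication by $g$ yields injections $E(-(k+1)D)\into E(-kD)$ and, by composition, injections $E(-kD)\into E$ whose image is the subsheaf of germs lying in $g^kE$. Taking global sections produces a nested chain
\[ H^0(Z,E)\supseteq H^0(Z,E(-D))\supseteq H^0(Z,E(-2D))\supseteq\cdots \]
of linear subspaces of $H^0(Z,E)$, which is finite-dimensional because $Z$ is compact and each $E(-kD)$ is coherent. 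Hence the chain stabilises: there is a $k_0$ with $H^0(Z,E(-kD))=W$ for all $k\geq k_0$, where $W=\bigcap_kH^0(Z,E(-kD))$, and it remains to prove $W=0$.

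Fix $s\in W$ and view it as $\tilde s\in H^0(Z,E)$; by construction the germ $\tilde s_p$ lies in $g^kE_p$ for every $k$ and every $p\in\supp D$. The first step is local and is exactly the ``finite order of vanishing'' principle quoted above: each $\sO_{Z,p}$ is a Noetherian local ring, $E_p$ is a finite free module over it, and for $p\in\supp D$ the function $g$ lies in the maximal ideal, so Krull's intersection theorem gives $\bigcap_kg^kE_p=0$ and hence $\tilde s_p=0$. Thus $\tilde s$ vanishes on an open neighbourhood $U$ of $\supp D$, and since $D$ meets every irreducible component of $Z$, so does $U$.

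The remaining, and I expect the most delicate, step is to propagate this vanishing from $U$ to all of $Z$; this is where the absence of embedded components is indispensable. Let $A=\supp\tilde s$, an analytic subset disjoint from $U$. For each irreducible component $Z_i$ the intersection $A\cap Z_i$ is closed and avoids the nonempty open set $U\cap Z_i$, which is dense in $Z_i$ by irreducibility; hence $A\cap Z_i$ is nowhere dense in $Z_i$, and therefore $A$ is nowhere dense in $Z$. On a complex space without embedded components, however, no nonzero section of a locally free sheaf can have nowhere-dense support: working locally with $E\isom\sO_Z^{\oplus r}$, a nonzero germ $h\in\sO_{Z,p}$ whose support omits every local component satisfies $\Ann(h)\not\subseteq\gothp$ for each minimal prime $\gothp$, so $\sO_{Z,p}/\Ann(h)\isom\sO_{Z,p}\cdot h$ has a non-minimal associated prime, i.e.\ an embedded component of $\sO_{Z,p}$, contrary to hypothesis. (The necessity of this hypothesis is illustrated by a non-reduced space with an embedded point, on which a nilpotent function can vanish on a dense open set without being zero.) Consequently $\tilde s=0$, so $W=0$ and $H^0(Z,E(-kD))=0$ for all $k\geq k_0$.
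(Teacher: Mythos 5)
The paper offers no proof of this lemma for you to be compared against: it is imported verbatim from \cite[Prop.~1.1]{CDP}, accompanied only by the one-line heuristic that a non-zero section of a vector bundle can vanish along a divisor only to finite order. Your argument is a correct, self-contained proof, and it implements exactly that heuristic: the chain $H^0(Z,E)\supseteq H^0(Z,E(-D))\supseteq H^0(Z,E(-2D))\supseteq\cdots$ stabilises by Cartan--Serre finiteness; Krull's intersection theorem kills the germs of a section in $W=\bigcap_k H^0(Z,E(-kD))$ at every point of $\supp D$, so such a section vanishes on a neighbourhood $U$ of $\supp D$; and the associated-primes computation shows that, in the absence of embedded components, a non-zero section of a locally free sheaf cannot have nowhere dense support. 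The two hypotheses enter exactly where they should: $D$ meeting every irreducible component guarantees $U$ meets every component, and the exclusion of embedded components is what makes nowhere dense support impossible (your example of a nilpotent supported at an embedded point shows this is sharp). This level of generality is the one actually needed in the paper, since in the proof of Theorem \ref{theorem main2 new} the lemma is applied to possibly non-reduced fibre components.

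One justification needs repair, although the step it supports survives. You assert that the nonempty open set $U\cap Z_i$ ``is dense in $Z_i$ by irreducibility.'' For Euclidean-open sets this is false --- a small disc in $\P^1$ is open and far from dense --- because irreducibility of a complex space constrains its analytic subsets, not its Euclidean topology. What rescues the step is precisely the fact you record just beforehand: $A=\supp\tilde s$ is a closed \emph{analytic} subset, so $A\cap Z_i$ is a closed analytic subset of $Z_i$ which is proper (it misses the nonempty open set $U\cap Z_i$), and a proper closed analytic subset of an irreducible complex space is nowhere dense. With that substitution the propagation of the vanishing from $U$ to all of $Z$, and hence the whole proof, is complete.
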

The following observation is rather trivial but crucial for the proof.
\begin{lemma}\label{lem: power in Pic0}
 Let $X$ be a complex manifold with $b_2(X) = 0$ and let $L\in \Pic(X)$.
 Then for $m$ sufficiently divisible $mL\in \Pic^0(X)$. 
  \end{lemma}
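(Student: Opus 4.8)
The plan is to reduce the statement to a property of the first Chern class via the exponential sequence recalled in Subsection~\ref{subsection picard variety}, which identifies $\Pic^0(X)$ with the kernel of the homomorphism $c_1\colon \Pic(X) \to H^2(X,\Z)$. Since $c_1$ is a homomorphism of abelian groups, one has $c_1(mL) = m\, c_1(L)$, so the assertion $mL \in \Pic^0(X)$ is equivalent to the vanishing $m\, c_1(L) = 0$ in $H^2(X,\Z)$.

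Next I would invoke the hypothesis $b_2(X) = 0$, which says exactly that $H^2(X,\Q) = 0$. The natural map $H^2(X,\Z) \to H^2(X,\Z)\tensor_\Z \Q = H^2(X,\Q)$ has kernel equal to the torsion subgroup, so the vanishing of its target forces every class in $H^2(X,\Z)$ — in particular $c_1(L)$ — to be torsion. Let $n$ denote the order of $c_1(L)$.

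It then suffices to take $m$ divisible by $n$: for any such $m$ we obtain $c_1(mL) = m\, c_1(L) = 0$, hence $mL \in \ker c_1 = \Pic^0(X)$, which is precisely the meaning of ``for $m$ sufficiently divisible''. I do not expect a genuine obstacle here; the only subtlety worth flagging is that $b_2(X) = 0$ yields that $H^2(X,\Z)$ is a torsion group rather than trivial, so the argument must pass through the finite order of the specific class $c_1(L)$ and cannot simply assert that $c_1$ is the zero map.
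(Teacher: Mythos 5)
Your proof is correct and follows essentially the same route as the paper: both identify $\Pic^0(X)=\ker c_1$ via the exponential sequence and use $b_2(X)=0$ to conclude that $H^2(X,\Z)$ is torsion, so that a suitable multiple of $L$ has vanishing first Chern class. The only cosmetic difference is that the paper (using finiteness of $H^2(X,\Z)$ for compact $X$) takes $m$ to be a multiple of the exponent of the whole group, whereas you take $m$ divisible by the order of the single class $c_1(L)$ --- the same argument, stated per class.
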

\begin{proof}
Since $b_2(X) = 0$ the integral second cohomology is a finite abelian group and thus the exponential sequence (see Section \ref{subsection picard variety})  induces a short exact sequence
 \[ 0\to \Pic^0(X) \to \Pic(X) \to H^2(X, \IZ)\to 0.\]
 Choosing $m$ to be a multiple of  the exponent of $H^2(X, \IZ) $ proves the claim.
\end{proof}

\begin{proof}[Proof of Theorem \ref{theorem main2 new}]
We will show first that (1)$\implies$(2)$\implies$(3) using only the hypothesis $b_2(X) = 0$. To deduce item (2) from (1) consider the variety $\Pic^0(X)\times X$, the projection $p:\Pic^0(X) \times X \to X$, and the universal line bundle $\scrL$ explained in Remark \ref{remark universal line bundle}. Then we can view the vector bundle $p^*E\tensor \scrL$ as a family of vector bundles on $X$ parametrized by $\Pic^0(X)$. Since $\Pic^0(X)$ is connected (and the family is automatically flat) the holomorphic Euler characteristic $\chi(E\tensor L)$  is independent of $L\in \Pic^0(X)$, see \cite[III. Theorem 4.12 (iii)]{BS}.  From (1) we deduce that $\chi(E\tensor L)=0$ as claimed.

For (3) observe that the vanishing $H^2(X,\Q)=0=H^4(X,\Q)$ implies $c_1(E)=0=c_2(E)$ for every vector bundle $E$ on $X$ where the Chern classes are considered in $H^*(X,\Q)$. In particular, it follows from (2) and \eqref{eq hrr formula} that $0=\chi(T_X) = \frac 12\int_X c_3(X)$, see also \eqref{eq chern classes}.

It remains to show (1). Fix the vector bundle $E$ under consideration. 

\begin{description}
\item[Step 1] By upper semi-continuity of $\dim H^i(X,E \tensor L)$ in families of sheaves (see \cite[III. Theorem 4.12 (i)]{BS}) we obtain that $H^i(X,E \tensor L)=0$ for all $L$ from a dense (Zariski) open subset $U_i \subset \Pic^0(X)$ as soon as we find one such $L$. As the intersection of finitely many dense open subsets is again dense open, it suffices to find for each $i=0,\ldots,3$ a single $L=L_i \in \Pic^0(X)$ for which $H^i(X,E \tensor L)=0$. 
By Serre duality for compact complex spaces (see \cite{AK}) it suffices to show the claim in the cases $i=0$ and $i=2$.

 \item[Step 2] Let us show that $H^0(X, E\tensor L) = 0$ for some $L\in \Pic^0(X)$. For this part we only need to assume that $X$ contains an effective divisor $D$. Since we assumed the existence of a meromorphic function, we can choose $D$ to be one of the fibres.
  Then by Lemma \ref{lem: power in Pic0} there exists an $m$ such that $L = \ko_X(mD)\in \Pic^0(X)$ and $H^0(X, E\tensor L^{-k}) =0$ for $k\gg 0$ by Lemma \ref{lem: vanishing by twisting}. This settles the case of $H^0$.

 \item[Step 3] Let us show that $H^2(X, E\tensor L) = 0$ for some $L\in \Pic^0(X)$.
This vanishing is much more difficult to obtain and we will heavily use the meromorphic map~$f$. Resolving the indeterminacies of $f$ we get a diagram
 \[ \begin{tikzcd}
      {} & Y\arrow{dl}[swap]{\pi} \arrow{dr}{g}\\
      X \arrow[dashed]{rr}{f}& & C
    \end{tikzcd},
\]
where $Y$ is the closure of the graph of $f$ in $X \times C$ and $\pi$ and $g$ are induced by the projections. By assumption, $\pi$ is not an isomorphism. As $X$ is normal and $\pi$ is birational, we have $\pi_*\sO_Y=\sO_X$; in particular, $\pi$ has connected fibres. Replacing $g$ by its Stein factorization we may assume that also $g_*\sO_Y=\sO_C$. Note that $Y$ may fail to be smooth.

\item[Step 3a] We will show first that $C=\P^1$. Let $Z \to Y$ be a resolution of singularities such that the composition $Z \to X$ is a blowup in smooth centres: $Z=Z_n \to[\pi_n] Z_{n-1} \to[\pi_{n-1}] \ldots \to[\pi_{1}] Z_0=X$. Assume that the genus of $C$ is $g(C)>0$. We may assume that $n$ is minimal such that $Z_n \to C$ is holomorphic. If $Z_n$ is the blowup of $Z_{n-1}$ in a smooth subvariety $P$, then the exceptional fibres of $\pi_n$ are isomorphic to $\P^{\codim P -1}$, hence are contracted under $Z_n \to C$ which therefore factors through $Z_{n-1}$ contradicting the minimality of $n$.

\item[Step 3b] Choice of $L$. 
We fix an ample line bundle $A$ on $C=\P^1$. Then by Lemma \ref{lemma pullback} there is a line bundle $L_0$ on $X$ and an effective divisor $\Sigma \subset Y$ whose support is contained in the exceptional locus $\Exc(\pi)$ such that
\begin{equation}\label{eq geradenbuendel}
g^*A = \pi^*L_0 \tensor \sO_{Y}(-\Sigma).
\end{equation}
Note that by our assumption that $f$ does not extend to a holomorphic map from $X$ to $C$, every component of the exceptional locus (in particular every component of $\Sigma$) surjects onto $C$. 
By Lemma~\ref{lem: power in Pic0} we may replace $A$ by a multiple and then assume that $L_0\in \Pic^0(X)$. 
As $Y$ is irreducible and $g_*\sO_Y = \sO_C$, the general fibre of $g$ is  irreducible. 

Since $Y$ was defined to be the graph of $f$, the fibres of $g$ are embedded in $X$ and there is (at most) a finite number of reducible fibres. 
Let us denote $F^\Sigma$ the divisor on $X$ which is the union of all components of reducible fibres of $g$ which meet $\Sigma$ (on $Y$). Observe that this is a Cartier divisor as $X$ is smooth. Let $\gothF$ denote the set of all components of reducible fibres of $g$.
Now we introduce a weight for elements of $\gothF$.
Consider the graph whose vertices are elements of $\gothF$ and where two vertices are joined by an edge if and only if the corresponding components have nonempty intersection in $X$. Observe that as $g$ is not holomorphic there may be edges between vertices corresponding to components of different fibres of $g$.
Define the weight of an $F\in\gothF$ to be the minimal length of a path from the corresponding vertex to a vertex representing a component of $F^\Sigma$. As every fibre is connected and meets $\Sigma$ there is always such a path.
Let us denote by $F_i$ the divisor on $X$ which is the sum of all $F\in\gothF$ that have weight $i$ and let $d+1$ denote the maximal weight that shows up. Note that $F^\Sigma= F_0$ and the support of $F_0 + \ldots + F_{d+1}$ is the union of all reducible fibres of $g$ considered as a subvariety in $X$.
Now consider the line bundle $M$ on $X$ given by
\[ 
M=\sO_X(n_0 F_0 + n_1 F_1 + \ldots + n_d F_d).
\]
Multiplying by a suitable positive integer we may assume $M \in \Pic^0(X)$ thanks to Lemma \ref{lem: power in Pic0}.
We will need the following statement later in the proof.

\begin{claim}
Let $V$ be a locally free sheaf on $Y$ and let $0 \ll n_d \ll \ldots \ll n_0 \ll~k$ be integers. Then for every component $F$ of a fibre of $g$ the equality
\begin{equation}\label{eq m vanishing}
H^0(F,V \tensor \pi^*(M^\vee) \tensor \sO_{Y}(-k\Sigma)\vert_F)=0
\end{equation}
holds. 
\end{claim}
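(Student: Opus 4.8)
The plan is to restrict the line bundle $\mathcal L:=\pi^{*}(M^{\vee})\otimes\sO_Y(-k\Sigma)$ to $F$ and to exhibit it, after absorbing a fixed line bundle into $V$, as a large negative multiple of an effective divisor meeting $F$; the vanishing will then follow from Lemma \ref{lem: vanishing by twisting}. Writing $\mathcal L|_F=\sO_F\bigl(-k\Sigma|_F-\sum_{i=0}^{d}n_i(\pi^{*}F_i)|_F\bigr)$, the first observation is that each $F_i$ is a sum of fibre components of one fixed weight $i$, so the component $\pi(F)$ is contained in $F_i$ only if $F$ itself has weight $i$. Hence for every $i\neq w$, where $w$ denotes the weight of $F$ (and for \emph{every} $i$ when $F$ is a component of an irreducible fibre, to which no weight is attached), the divisor $(\pi^{*}F_i)|_F$ is an honest effective Cartier divisor on $F$; here one uses that $F_i$ is Cartier because $X$ is smooth.

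The argument then proceeds by induction on the weight $w$, the point being to isolate inside the effective part a distinguished \emph{anchor} divisor that meets $F$ and carries one of the large coefficients. If $F$ is a component of an irreducible fibre, or has weight $w=0$, then $F$ meets $\Sigma$ — recall that every component of $\Sigma$ surjects onto $C$ by the choice made in \eqref{eq geradenbuendel}, so $\Sigma$ meets every fibre of $g$ — and $D:=\Sigma|_F$ is a nonzero effective Cartier divisor meeting $F$, appearing with coefficient $k$. If $w\geq 1$, the definition of the weight forces $F$ to meet, inside $X$, a component of weight $w-1$, so that $D:=(\pi^{*}F_{w-1})|_F$ is a nonzero effective divisor meeting $F$, now with coefficient $n_{w-1}$. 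In both cases write $N$ for the coefficient of the anchor, i.e. $N=k$ if $w=0$ and $N=n_{w-1}$ otherwise.

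The only term of $\mathcal L|_F$ that may fail to be effective is the self-intersection contribution $-n_w(\pi^{*}F_w)|_F$ coming from $i=w$. Here $(\pi^{*}F_w)|_F$ is a \emph{fixed} line bundle on $F$ (the restriction of the Cartier divisor $\pi^{*}F_w$), which in general is neither effective nor anti-effective; rather than attempt to split it into effective pieces — impossible on an arbitrary compact complex surface — I absorb it into the bundle, setting $E':=V\otimes\bigl((\pi^{*}F_w)|_F\bigr)^{\otimes(-n_w)}$, and $E':=V$ when $w=d+1$, since $F_{d+1}$ does not occur in $M$. All remaining summands of $\mathcal L|_F$ are then effective, and the effective part contains the anchor term $N\cdot D$; thus there is an inclusion $V\otimes\mathcal L|_F\hookrightarrow E'(-ND)$ and therefore $H^{0}(F,V\otimes\mathcal L|_F)\hookrightarrow H^{0}(F,E'(-ND))$.

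By Lemma \ref{lem: vanishing by twisting}, applied to the compact space $F$ (taken with its reduced structure, which is harmless since that lemma is stated for spaces without embedded components) and the effective divisor $D$ meeting it, the latter group vanishes as soon as $N$ exceeds the threshold $k_0$ of that lemma for the pair $(E',D)$. Since $E'$ depends only on $n_w$ and on the fixed geometry, this is exactly what the hierarchy $0\ll n_d\ll\cdots\ll n_0\ll k$ provides: one fixes $n_d$ first (large in absolute terms, handling the weight-$(d+1)$ components, where $E'=V$), then chooses each $n_{w-1}$ large compared with the already-fixed $n_w$, and finally takes $k$ largest of all (handling the weight-$0$ components and the components of irreducible fibres). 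I expect the main obstacle to be precisely the non-effective self-intersection term $(\pi^{*}F_w)|_F$: controlling it by absorption into $V$, and verifying that the anchor always meets $F$ — which rests on the connectivity of the fibres of $g$ together with the fact that every fibre meets $\Sigma$, so that every component has finite weight and is adjacent to one of the next lower weight — is the heart of the proof.
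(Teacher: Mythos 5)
Your proof is correct and follows essentially the same route as the paper: a descending induction on the weight in which the only non-effective restriction, the self-intersection term $-n_w F_w|_F$, is absorbed into the vector bundle (the paper's ``$\sO_F(n_i F_i + \ldots)$ is some line bundle''), while the next-lower-weight divisor $F_{w-1}$ (resp.\ $\Sigma$ for weight zero) provides the non-zero effective twisting divisor for Lemma \ref{lem: vanishing by twisting}, the hierarchy $0 \ll n_d \ll \cdots \ll n_0 \ll k$ making the finitely many thresholds compatible. Your explicit treatment of components of irreducible fibres (anchored on $\Sigma$ via $k$) is a small addition that the paper leaves implicit, but the argument is the same.
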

\begin{proof}[Proof of Claim]
As $\iota_X = \pi \circ \iota_Y$ where $\iota_X,\iota_Y$ are the inclusions of $F$ to $X$ and $Y$, we have to show that 
\[
H^0(F,\iota_X^*(V \tensor M^\vee) \tensor \iota_Y^*\sO_{Y}(-k\Sigma))=0.
\]

First, we show that if  $n_d$ is big enough we obtain the claim for all $F$ of weight $d+1$. For this we simply observe that $\Sigma$ and $F_i$ for all $i$ restrict to an effective divisor on $F$ (note that $F$ is not contained in $\Sigma$ because every exceptional divisor surjects onto $C$) and $F_d$ restricts to a non-zero effective divisor on $F$ so that the claim follows from Lemma \ref{lem: vanishing by twisting}. Suppose that $F$ has weight $i>0$ and that the integers $n_d, n_{d-1},\ldots,n_i$ are already chosen to obtain the claim for components of weight $\geq i+1$. Then $\sO_F(n_i F_i + n_1 F_1 + \ldots + n_d F_d)$ is some line bundle, the $F_j$, $j\leq i-1$, and $\Sigma$ restrict to an effective divisor on $F$ and $F_{i-1}$ restricts to a non-zero effective divisor on $F$ so that the claim follows from Lemma \ref{lem: vanishing by twisting} if we choose $n_{i-1}$ big enough.\footnote{Indeed, it is maybe not necessary the $n_i$ and $k$ are ordered in the way formulated in the Claim, but like this we are sure that we can later on (namely in \eqref{eq leray vanishing} for $p=1$) still increase $k$ and this does not affect the validity of the Claim.} Finally, if $F$ has weight zero, the claim follows by adjusting $k$.
\end{proof}

Continuing the proof of Theorem \ref{theorem main2 new}, we choose $L =  L_0^{m+k} \tensor M$ with $m,k,n_0, \ldots, n_d$ still to be determined. 
With this choice, \eqref{eq geradenbuendel} yields
\begin{equation}\label{eq geradenbuendel2}
\pi^*L = \pi^*\left(L_0^{\tensor m} \tensor M \right)\tensor g^*A^k\tensor \sO_{Y}(k\Sigma).
\end{equation}

\item[Step 3c] 
We claim that for suitably chosen $m,k,n_0, \ldots, n_d > 0$ we have $H^2(X,E\tensor L)=0$.
 For this  it is sufficient to show that for some $k \gg 0$ we have $H^2(Y,\pi^*(E\tensor L)(-k\Sigma))=0$ as we show next. Let us abbreviate $\Sigma_k:=k\Sigma$. The last claim follows if we show that the canonical morphisms
\[
\begin{tikzcd}
 H^2(X,E\tensor L) \rar[right hook->]  & H^2(Y,\pi^*(E\tensor L))\\ & H^2(Y,\pi^*(E\tensor L)(-\Sigma_k)) \arrow[twoheadrightarrow]{u}
\end{tikzcd}
\]
are injective respectively surjective. Injectivity follows from the Leray spectral sequence for $\pi$, which collapses at $E_2$ by the projection formula and the fact that $R\pi_*\sO_{Y}=\sO_X$ by (3) of Lemma \ref{lemma higher direct images}. To deduce surjectivity, let $B_k \subset X$ be the closed subspace defined by the sheaf of ideals $\pi_*\sO_{Y}(-\Sigma_k) \subset \sO_{Y}$ and look at the short exact sequence 
\[
0 \to \pi^*(E\tensor L)(-\Sigma_k) \to \pi^*(E\tensor L) \to \pi^*(E\tensor L)\vert_{\Sigma_k} \to 0.
\]
The induced long exact sequence of cohomology groups
\[
\to H^2(Y,\pi^*(E\tensor L)(-\Sigma_k)) \to H^2(Y,\pi^*(E\tensor L)) \to H^2(\Sigma_k,\pi^*(E\tensor L)\vert_{\Sigma_k}) 
\]
allows to deduce surjectivity from the vanishing 
$$H^2(\Sigma_k,\pi^*(E\tensor L)\vert_{\Sigma_k})=H^2(B_k,(E\tensor L)\vert_{\Sigma_k}\tensor \pi_*\sO_{\Sigma_k})=0$$ where for the first equality we invoke Lemma \ref{lemma higher direct images} and the second equality follows from $\dim B_k\leq 1$. 
	To calculate $$H^2(Y,\pi^*(E\tensor L)(-\Sigma_k)) = H^2(Y,\pi^*(E\tensor L_0^{\tensor m}) \tensor g^*A^{\tensor k})$$ via the Leray spectral sequence we have to show the vanishing of
\begin{equation}\label{eq leray vanishing}
H^p\left(C, R^q g_* \left(\pi^*(E\tensor L_0^{\tensor m}\tensor M)\right) \tensor A^{\tensor k}\right)=0
\end{equation}
for all $p,q$ with $p+q=2$. For $p=2$ this vanishing is obvious for dimension reasons and for $p=1$ it follows from Serre's theorem if $k\gg 0$. 

For $p=0$ we claim that  $R^2 g_* \left(\pi^*(E\tensor L_0^{\tensor m}\tensor M)\right)=0$. By base change, see \cite[III. Corollary 3.5]{BS}, it suffices to prove that for every (!) fibre $F$ of $g$ we have 
\begin{equation}\label{eq: last computation}
\begin{split}
0 & =  H^2(F, \pi^*(E\tensor L_0^{\tensor m}\tensor M)\tensor g^* A^{\tensor k}|_F)\\ 
& =  H^2(F, \pi^*(E\tensor L_0^{\tensor m}\tensor M)|_F)\\
&=H^2(F, \pi^*E\vert_F \tensor M \tensor \sO_F(m\Sigma)) \\
&=H^0(F, \pi^*E^\vee\vert_F \tensor \omega_F \tensor M^\vee \tensor \sO_F(-m\Sigma))^\vee
\end{split}
\end{equation}
where in the second step we used that $\pi^*L_0\vert_F= \sO_Y(\Sigma)\vert_F$ thanks to \eqref{eq geradenbuendel} and for the last step we used Serre duality on the fibre. Note that $F$ is Cohen-Macaulay\footnote{It is worthwhile noting that even though Serre duality needs $F$ to be Cohen-Macaulay, we would have needed less as we only care about the duality between $H^0$ and $H^{\dim F}$, which holds for every compact complex space of pure dimension.} and $\omega_F = \omega_X(-F)|_F$ is locally free because $F \subset X$ is a Cartier divisor in the smooth manifold $X$.

On an irreducible fibre one can achieve the vanishing \eqref{eq: last computation} by choosing the appropriate value for $m$ by Lemma \ref{lem: vanishing by twisting}, because $\Sigma$ dominates $C$.
For reducible fibres the vanishing follows from \eqref{eq m vanishing} by an appropriate choice of $0 \ll n_d \ll \ldots \ll n_0 \ll k$. Note that as $k$ is the last parameter to be adjusted we can choose it in such a way that it simultaneously implies the vanishing \eqref{eq leray vanishing} needed above for $p=1$.

Therefore, for the appropriate choice of $m,k,n_0,\ldots,n_d$ there is no second cohomology along the fibres and thus $R^2 g_* \pi^*(E\tensor L_0^{\tensor m}\tensor M)=0$. This concludes the last step of the proof. 

\end{description}
\end{proof}
\begin{remark}\label{rem: gap}
We would like to explain, where the gap in \cite{CDP} is hidden and where the problem shows up in the proof given above. 

Lemma 1.5 in \cite{CDP} claims that if $g\colon Y\to C$ is a holomorphic map from a threefold to a curve and $E$ is a vector bundle on $Y$ then $R^ig_*E$ is locally free. 

As a counterexample consider an elliptic curve $E$ and on the product $E\times E$ the divisor $D = \{0\}\times E -\Delta$, where $\Delta$ is the diagonal. If $p$ is the projection to the second factor then $R^1p_*\ko_{E\times E}(D)$ is a skyscraper sheaf supported at $0$, in particular not locally free. Pulling back to $E\times E\times \IP^1$ we get the same phenomenon on threefold. 

If in the proof of Theorem \ref{theorem main2 new} in the last step $R^2 g_* \pi^*(E\tensor L_0^{\tensor m}\tensor M)$ would be locally free, then it would be enough to check the vanishing \eqref{eq: last computation} for the general fibre and there would be no need to treat also the reducible fibres. 
\end{remark}

\section{Examples of complex threefolds with \texorpdfstring{$b_2=0$}{b2=0}.}\label{sec examples}
In this section we give a simple construction to show that the class of manifolds satisfying the conditions of Theorem \ref{theorem main2 new} is non-empty. For special choices we recover some classical Calabi--Eckmann-manifolds \cite{ce53}, a construction which has by now been generalized in several directions, see for instance \cite{m00,b01, mv04,Catanese,  bo15,  puv16}.

Here we strive more for simplicity and thus do not work in the full possible generality.

\subsection{Holomorphic principal bundles with fibre an elliptic curve}

We fix an elliptic curve $ E = \IC/\Gamma$, that is, a compact complex Lie group of dimension $1$. We trivially get the short exact sequence of complex Lie groups
\begin{equation}\label{eq: ex seq E}
 \begin{tikzcd}
  0\rar&  \Gamma\rar & \IC\rar & E\rar & 0.
 \end{tikzcd}
\end{equation}
Now assume we have a holomorphic $E$-principal bundle $\pi\colon X \to B$ over a compact complex manifold $B$. Covering $B$ by open balls $U_i$ we know that $\inverse\pi (U_i)\isom U_i\times E$ as principal bundles over $U_i$, and we can therefore describe the principal bundle by a holomorphic cocycle with values in the structure group $E$. 

In other words, if 
\[ \ke_B \colon U \mapsto \{ \phi\colon U \to E \text{ holomorphic}\}\]
is the sheaf on $B$ of holomorphic maps to $E$, then $E$-principal bundles are classified by the \v{C}ech-cohomology group $\check H^1(B, \ke_B)$. 

To study this cohomology group, we use \eqref{eq: ex seq E}: the sheaf of  holomorphic maps to  $\Gamma$ is $\underline{\Gamma}_B$, the sheaf of locally constant functions with values in $\Gamma$, and the sheaf of holomorphic maps to $\IC$ is just the structure sheaf $\ko_B$, so that 
we get a short exact sequence of sheaves of abelian groups
\begin{equation}\label{eq: ex seq curly E}
 \begin{tikzcd}
  0\rar&  \underline{\Gamma}_B\rar & \ko_B\rar & \ke_B\rar & 0.
 \end{tikzcd}
\end{equation}
Possible holomorphic $E$-principal bundles over $B$ can now be studied via the associated long exact cohomology sequence and we do so in a special case. 
\begin{proposition}\label{prop: principal E bundles over B}
 Assume that $B$ is a compact complex  manifold with $h^1(B, \ko_B)=0$ and $E = \IC/\Gamma$ is an elliptic curve. Then:
 \begin{enumerate}
  \item Every holomorphic $E$-principal bundle $\pi\colon X\to B$ is classified (up to isomorphism) by its class $c(\pi)\in H^1(B, \Gamma)$.
  \item If in addition $h^2(\ko_B)=0$ then every class in $H^1(B, \Gamma)$ can be realized as the class of a holomorphic $E$-principal bundle. 
  \item If in addition $B$ is simply connected and $\pi\colon X\to B$ is a holomorphic $E$-principal bundle, then $\pi_i(X) = \pi_i(B)$ for $i\geq 3$ and there is an exact sequence 
  \[
   \begin{tikzcd}
    0\rar & \pi_2(X) \rar & H_2(B, \IZ)\rar{c(\pi)} & \Gamma\rar & \pi_1(X)\rar & 0,
   \end{tikzcd}
  \]
  where we consider $c(\pi)\in H^2(B, \Gamma) \isom \Hom(H_2(B, \IZ), \Gamma)$. 
 \end{enumerate}
\end{proposition}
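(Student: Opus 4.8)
The plan is to obtain (1) and (2) directly from the long exact cohomology sequence of \eqref{eq: ex seq curly E}, and to derive (3) from the long exact homotopy sequence of the fibre bundle $\pi$, the only genuine work being the identification of the topological connecting homomorphism with the class $c(\pi)$.

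First I would write down the long exact sequence associated to \eqref{eq: ex seq curly E},
\[
\cdots \to H^1(B, \underline{\Gamma}_B) \to H^1(B, \ko_B) \to H^1(B, \ke_B) \to[\delta] H^2(B, \underline{\Gamma}_B) \to H^2(B, \ko_B) \to \cdots,
\]
and use that $H^i(B, \underline{\Gamma}_B) = H^i(B, \Gamma)$ because $\underline{\Gamma}_B$ is the locally constant sheaf. As recalled above, isomorphism classes of holomorphic $E$-principal bundles are classified by $H^1(B, \ke_B)$, and I set $c(\pi) := \delta([\pi]) \in H^2(B, \Gamma)$. The hypothesis $h^1(B, \ko_B) = 0$ makes $\delta$ injective, which is exactly the assertion that $c(\pi)$ determines $\pi$ up to isomorphism, proving (1). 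If in addition $h^2(B, \ko_B) = 0$, then $\delta$ becomes an isomorphism, so every class in $H^2(B, \Gamma)$ arises as $c(\pi)$ for some $\pi$, proving (2).

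For (3) I would forget the complex structure and treat $\pi\colon X \to B$ as a topological fibre bundle with fibre $E$, feeding it into the long exact homotopy sequence
\[
\cdots \to \pi_i(E) \to \pi_i(X) \to \pi_i(B) \to \pi_{i-1}(E) \to \cdots.
\]
Since $E \cong \IS^1 \times \IS^1$ is aspherical, a $K(\Gamma,1)$, we have $\pi_1(E) = \Gamma$ and $\pi_i(E) = 0$ for $i \geq 2$. Hence for $i \geq 3$ both neighbouring fibre terms vanish and $\pi_i(X) \iso \pi_i(B)$. The low-degree part, using that $B$ is simply connected so that $\pi_1(B) = 0$, collapses to the four-term exact sequence
\[
0 \to \pi_2(X) \to \pi_2(B) \to[\partial] \Gamma \to \pi_1(X) \to 0.
\]
By the Hurewicz theorem $\pi_2(B) \iso H_2(B, \IZ)$, and since $H_1(B, \IZ) = 0$ the universal coefficient theorem gives $H^2(B, \Gamma) \iso \Hom(H_2(B, \IZ), \Gamma)$; this is what allows $\partial$ and $c(\pi)$ to be compared as homomorphisms $H_2(B, \IZ) \to \Gamma$.

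The hard part will be to show $\partial = c(\pi)$ under these identifications. I would test both sides against a class $[\alpha] \in \pi_2(B)$ represented by $\alpha\colon \IS^2 \to B$. On the topological side, $\partial[\alpha]$ is by definition the clutching element in $\pi_1(E) = \Gamma$ of the pullback bundle $\alpha^*X \to \IS^2$, obtained by trivialising over the two hemispheres and recording the transition over the equator. On the analytic side, $c(\pi)$ is computed from a \v{C}ech cocycle $\{g_{ij}\}$ representing $[\pi]$ in $H^1(B, \ke_B)$ by lifting the $g_{ij}$ to holomorphic maps $\tilde g_{ij}\colon U_{ij} \to \IC$ along $\ko_B \onto \ke_B$ and forming the $\Gamma$-valued $2$-cocycle $\tilde g_{jk} - \tilde g_{ik} + \tilde g_{ij}$; pulled back along $\alpha$ this cocycle is precisely the clutching datum of $\alpha^*X$. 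Both routes therefore return the same element of $\Gamma$. Conceptually this is the standard compatibility coming from the group extension $0 \to \Gamma \to \IC \to E \to 0$: as $\IC$ is contractible it exhibits $E$ as $K(\Gamma,1)$ and $BE \simeq K(\Gamma,2)$, so $c(\pi)$ is the homotopy class of the classifying map $\phi\colon B \to K(\Gamma,2)$, while $\partial$ is $\phi_*$ on $\pi_2$ followed by the canonical isomorphism $\pi_2(K(\Gamma,2)) \iso \Gamma = \pi_1(E)$, which is exactly evaluation of $c(\pi)$. Making this clutching comparison precise is the only genuinely non-formal step.
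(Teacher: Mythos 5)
Your proposal follows essentially the same route as the paper: parts (1) and (2) come from the long exact cohomology sequence of \eqref{eq: ex seq curly E} with $c(\pi)$ defined as the image of $[\pi]$ under the connecting map, and part (3) from the homotopy sequence of the fibration together with Hurewicz and universal coefficients (note that the statement's ``$c(\pi)\in H^1(B,\Gamma)$'' in (1)--(2) is a typo for $H^2(B,\Gamma)$, which is what you correctly use). You in fact go beyond the paper's proof, which silently identifies the homotopy connecting homomorphism $\partial$ with evaluation of $c(\pi)$; your clutching-function/classifying-space comparison supplies precisely the step the paper leaves implicit.
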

\begin{proof}
The long exact cohomology sequence for \eqref{eq: ex seq curly E} reads 
 \begin{equation}\label{eq: LES}
   \begin{tikzcd}[column sep = small]
 H^1(B, \ko_B) \rar & H^1(B, \ke_B) \rar{c(-)} & H^2(B, \Gamma)\rar&  H^2(B, \ko_B)\rar & \dots
   \end{tikzcd},
  \end{equation}
which immediately yields the first two items under our assumptions. For the last item we use Hurewicz to identify $\pi_2(B)\isom H_2(B, \IZ)$ and the universal coefficient theorem to describe the items in the long exact sequence for the homotopy groups of the fibration $\pi$. The homotopy groups of $E$ are all trivial except $\pi_1(E) = \Gamma$, since the universal cover is contractible.
\end{proof}

\subsection{Examples}
Let $E = \IC/\Gamma$ be a fixed elliptic curve.

\begin{example}
 Let $B$ be a simply connected K\"ahler surface with $H_2(B, \IZ)\isom\IZ^2$ and $p_g (B) = h^2(B, \ko_B) = 0$. Choose an isomorphism $c\colon  H_2(B, \IZ)\isom \Gamma$.
 Then by Proposition \ref{prop: principal E bundles over B} there exist a principal bundle $\pi\colon X\to B$ with class $c(\pi)  = c$ and $X$ is simply connected with $H_2(X, \IZ)=0$.

 Choosing $B = \IP^1\times \IP^1$ recovers the construction of Calabi and Eckmann, taking $B$ to be one of the Hirzebruch surfaces falls in the larger class of examples covered by \cite[Sect.~5]{mv04}. 

 Note that in both these cases every meromorhic function in $\km (B) \isom \IC(x,y)$ pulls back to a meromorphic function on $X$ so that there are a lot of non-holomorphic, meromorphic functions on $X$. 
 
  In particular, $X$ satisfies the assumptions of Theorem \ref{theorem main2 new}.
 \end{example}

\begin{example}
 Let $\gamma$ be an element in a basis of $\Gamma$ and let $c \colon H_2(\IP^2, \IZ)\isom \IZ \to \Gamma$ be given by $1\mapsto \gamma$. 
 Considering the associated principal bundle $\pi\colon X\to \IP^2$ see that topologically $X = Y \times S^1$ where $Y$ is an $S^1$-bundle over $\IP^2$. By  Proposition \ref{prop: principal E bundles over B} and the K\"unneth formula we get an example where $\pi_1(X) = \IZ$ and $b_2(X)=0$. 
\end{example}

\begin{example}
 Let $B$ be a compact complex curve.  Let $\gamma$ be an element in a basis of $\Gamma$ and let $c \colon H_2(B, \IZ)= \IZ[B] \to \Gamma$ be given by $[B]\mapsto \gamma$. Since $H^2(B, \ko_B)=0$ for dimension reasons, we infer from the sequence \eqref{eq: LES} that $c$ considered as an element of $H^2(B, \Gamma)$ can be realised as the class of an $E$-principal bundle $\pi\colon X\to B$. Similar to the previous example we get $b_1(X) = b_1(B)+1 = 2g(B)+1$ by the Gysin sequence and the K\"unneth formula, so $X$ is a non-K\"ahler surface. 
 
For $B=\IP^1$ we get a special Hopf surface, compare Example \ref{ex: Hopf}. For an elliptic curve $B$ we get a Kodaira surface \cite[p. 197]{BHPV}. The latter is also known as Kodaira-Thurston manifold, because it was exhibited by Thurston in \cite{Thu} as the first example of a symplectic manifold that does not admit K\"ahler structures. 
\end{example}

\section{The automorphism group of a hypothetical complex \texorpdfstring{$\csphere$}{S6}}\label{sect: aut}
Another important and interesting object in the study of a compact complex manifold $X$ is its group of holomorphic automorphisms $\Aut_\sO(X)$. Recall that $\Aut_\sO(X)$ is a complex Lie group whose Lie algebra is the Lie algebra of holomorphic vector fields \cite{BM}.

In this section we discuss the result of Huckleberry, Kebekus, and Peternell on the automorphism group of a hypothetical complex structure on $\dsphere$. To state the theorem we need the following definition.

\begin{definition}
  A connected compact complex manifold $X$ is called almost homogeneous,
  if a closed complex subgroup $G$ of $\Aut_\sO(X)$ has an open orbit
  $\Omega$ in $X$.
  Then $\Omega$ is a dense open connected complex submanifold of $X$
  and $X \setminus \Omega$ is a proper analytic subset of $X$.
\end{definition}

{ 
We expect that the condition $a(\csphere) = 0$ can be removed\footnote{After the present article had been published, it was shown in \cite{CDP19} that $a(\csphere) = 0$ holds indeed.} in the following results, see Section \ref{sec main theorem}.
}

\begin{theorem}[Huckleberry, Kebekus, Peternell]\label{thm: almost_homogeneous}
 A complex $6$-sphere $\csphere$ with algebraic dimension  $a(\csphere)=0$ cannot be almost homogeneous.
\end{theorem}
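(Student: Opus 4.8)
The plan is to argue by contradiction. Suppose $\csphere$ is almost homogeneous, so that a closed connected complex subgroup $G \subseteq \Aut_\sO(\csphere)$ (we may pass to the identity component) has an open orbit $\Omega = G/H$ whose complement $A := \csphere \setminus \Omega$ is a proper $G$-invariant analytic subset with $\dim_\IC A \le 2$. The guiding principle is that $a(\csphere)=0$ can never suffice on its own: non-K\"ahler compact complex homogeneous manifolds of algebraic dimension zero exist among parallelizable manifolds $\widehat G/\Gamma$, but these are never simply connected. Hence the proof must exploit the three topological features peculiar to $\dsphere$: it is simply connected, $b_2(\csphere)=0$, and its Euler number is $e(\csphere)=2\neq 0$. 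Via Gau\ss--Bonnet \eqref{eq gauss bonnet} the last reads $c_3(\csphere)=2$, and in particular every global holomorphic vector field $v\in\gothg:=H^0(\csphere,T_{\csphere})=\Lie(G)$ must vanish at some point of $\csphere$.

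First I would extract the consequences of $a(\csphere)=0$. By Corollary~\ref{cor: partial a(S6)=0} all meromorphic functions on $\csphere$ are constant, so the algebraic reduction of $\csphere$ is a point and there is no dominant meromorphic map from $\csphere$ onto a positive-dimensional projective (or Moishezon) variety. Any such fibration produced equivariantly from the $G$-action would contradict this, so the orbit geometry of $G$ must avoid rational-homogeneous quotients. Combined with $b_2(\csphere)=0$ this is the lever against a semisimple part: decomposing $G=S\ltimes R$ into semisimple part $S$ and radical $R$, I would argue that a nontrivial $S$ forces, through a closed orbit $S/Q$ and its $\gothp$-fibration onto a flag variety $S/P$, either a non-constant meromorphic map to $S/P$ or a nonzero class in the vanishing group $H^2(\csphere,\IQ)$ --- a contradiction in either case. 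This should reduce the analysis to $G$ solvable.

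For the solvable case I would localize using a maximal torus $T\subseteq G$. The Euler number localizes to the fixed locus, $e(\csphere^T)=e(\csphere)=2\neq 0$, so $T$ has a fixed point $p$ (and if $G$ is unipotent one argues directly that any generator of $\gothg$ already vanishes somewhere). At $p$ I would study the isotropy representation of $T$ on $T_p\csphere\iso\IC^3$ together with the structure of $H$, aiming to show that no configuration is compatible simultaneously with $a(\csphere)=0$, with $b_2(\csphere)=0$, and with the requirement that \emph{every} $v\in\gothg$ vanish somewhere: a central one-parameter subgroup acting without zeros on the open orbit would, if nowhere-vanishing on all of $\csphere$, contradict $e(\csphere)\neq 0$, whereas forcing its generator to vanish along $A$ pins down $A$ as a $G$-invariant divisor, whose class is torsion by Lemma~\ref{lem: power in Pic0} and whose presence must then be played off against the simple connectivity of $\csphere$.

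The main obstacle is exactly this structural classification: converting the soft input ``$a=0$'' and the topology of $\dsphere$ into a rigid description of the admissible pairs $(G,H)$ and of the boundary $A$, and then exhibiting in each case a non-constant meromorphic function or a violation of $b_2=0$, $\pi_1=0$, or $e\neq 0$. Two points are genuinely delicate. First, $\Omega=G/H$ is a non-K\"ahler, non-algebraic homogeneous space, so the Borel fixed-point theorem and the algebraic fibration theorems are unavailable and must be replaced by their analytic counterparts in the compact complex category. Second, one must control the lower-dimensional boundary orbits making up $A$ and reconcile the typical vanishing of $e(\Omega)$ for a positive-dimensional homogeneous space with the non-vanishing $e(\csphere)=2$; it is in this comparison, rather than in any single formal manipulation, that the contradiction ultimately has to be located.
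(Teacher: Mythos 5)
Your opening moves coincide with the paper's: argue by contradiction, and play the topology of $\dsphere$ (simple connectivity, $b_2=0$, and $e(\csphere)=2\neq 0$, which via Gau{\ss}--Bonnet forces every holomorphic vector field to vanish somewhere) against the group action and against $a(\csphere)=0$. But there is a genuine gap, in two respects. First, you never rigidify the situation, and that rigidification is precisely the step the paper carries out before deferring to \cite{HKP}: by Proposition \ref{prop: wedging sections}, $a(\csphere)=0$ forces $h^0(\csphere,T_{\csphere})\leq 3$, while the existence of an open orbit forces $\dim G\geq 3$; hence $\dim G=h^0(\csphere,T_{\csphere})=3$, the isotropy of the open orbit is discrete, and --- again by Proposition \ref{prop: wedging sections} --- the section $X_1\wedge X_2\wedge X_3\in H^0\left(\csphere,\Wedge^3T_{\csphere}\right)$ is non-zero, so the complement of the open orbit is not an arbitrary invariant analytic set of dimension $\leq 2$ but the zero \emph{divisor} of this section, i.e.\ a compact complex surface $E$. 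This is what makes the remaining analysis feasible: one only needs the classification of $3$-dimensional complex Lie groups and their actions, the classification of compact complex surfaces applied to (a desingularization of) $E$, and Krasnov's finiteness of hypersurfaces on manifolds with $a=0$ \cite{KRA}. Your plan instead contemplates a group $G$ of unknown dimension and a boundary $A$ with no divisorial structure, which makes the ``structural classification of admissible pairs $(G,H)$'' you call for far harder than what is actually needed.

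Second, and more fundamentally, the core of your argument is announced rather than executed. The exclusion of a semisimple factor rests on producing, from a closed $S$-orbit and its fibration over a flag variety, a non-constant meromorphic map defined on \emph{all} of $\csphere$ (so as to contradict Corollary \ref{cor: partial a(S6)=0} or $b_2=0$); you give no construction extending such a map from the orbit to the ambient manifold, and none is routine in the non-algebraic, non-K\"ahler setting, as you yourself concede when noting that the Borel fixed point theorem and algebraic fibration theorems are unavailable. In the solvable case the argument ends with ``aiming to show that no configuration is compatible,'' i.e.\ the contradiction is never exhibited. To be fair, the paper also does not reprove Theorem \ref{thm: almost_homogeneous} --- it lists the ingredients and refers to \cite{HKP} for the details --- but it does perform the two-line rigidification above, which is exactly the piece missing from your proposal and the one that Proposition \ref{prop: wedging sections} was set up to deliver.
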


We deduce a result in the spirit of the bounds on the Hodge-numbers explained in \cite{Angella}. 
\begin{corollary}
A complex $6$-sphere $\csphere$ with $a(\IS^6)=0$ carries at most $2$ linearly independent holomorphic vector fields, i.e.,
 $h^0(T_{\csphere}) \leq 2$.
\end{corollary}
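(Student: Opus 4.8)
The plan is to deduce the bound on $h^0(T_{\csphere})$ from Theorem \ref{thm: almost_homogeneous} by contradiction. First I would recall that, as stated in the opening of Section \ref{sect: aut}, the Lie algebra of the automorphism group $\Aut_\sO(\csphere)$ is precisely the space of global holomorphic vector fields $H^0(T_{\csphere})$. Thus a nonzero holomorphic vector field exponentiates to a nontrivial one-parameter subgroup of $\Aut_\sO(\csphere)$, and in general $\dim \Aut_\sO(\csphere) = h^0(T_{\csphere})$.

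Suppose for contradiction that $h^0(T_{\csphere}) \geq 3$. The strategy is to show that a group of this dimension is forced to have an open orbit, contradicting Theorem \ref{thm: almost_homogeneous}. The key observation is a dimension count on orbits: let $G \subseteq \Aut_\sO(\csphere)$ be the connected closed complex subgroup with $\Lie(G) = H^0(T_{\csphere})$, and consider its orbits in $\csphere$. Since $\dim_\C \csphere = 3$, any orbit has complex dimension at most $3$. If $G$ had an open orbit, we would be done immediately by Theorem \ref{thm: almost_homogeneous}. So the real content is to rule out the possibility that all orbits have dimension $\leq 2$, which is where the assumption $h^0 \geq 3$ must be exploited. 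Here I would want to invoke Proposition \ref{prop: wedging sections}: since $a(\csphere) = 0$, applied to $\ke = T_{\csphere}$ (a rank $3$ bundle) it already gives $h^0(T_{\csphere}) \leq \rk(T_{\csphere}) = 3$. This caps $h^0$ at $3$, so the corollary is really the statement that equality $h^0 = 3$ cannot occur.

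The cleanest route is therefore to show that three linearly independent holomorphic vector fields $v_1, v_2, v_3$ on $\csphere$ would have to be generically linearly independent as tangent vectors, thereby spanning the full tangent space at a general point and producing an open orbit. By Proposition \ref{prop: wedging sections}, linear independence of $v_1, v_2, v_3$ over $\C$ forces $v_1 \wedge v_2 \wedge v_3$ to be a nonzero holomorphic section of $\Wedge^3 T_{\csphere} = \det T_{\csphere} = K_{\csphere}^\vee$. A nonzero section of the anticanonical bundle vanishes only on a proper analytic subset, so at a general point $x$ the vectors $v_1(x), v_2(x), v_3(x)$ are a basis of $T_{\csphere,x}$. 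Consequently the orbit of $G$ through $x$ is open, and $\csphere$ is almost homogeneous, contradicting Theorem \ref{thm: almost_homogeneous}.

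The main obstacle I anticipate is the passage from the existence of an open $G$-orbit to strict almost homogeneity in the sense of the definition, namely checking that the relevant subgroup $G$ is genuinely a \emph{closed} complex subgroup of $\Aut_\sO(\csphere)$ as the definition demands, rather than merely an abstract immersed subgroup generated by the flows of the $v_i$. One should either take $G = \Aut_\sO(\csphere)^0$, the identity component, whose Lie algebra is all of $H^0(T_{\csphere})$ and which is closed by construction, and argue that an orbit of the full group is at least as large as the span of the $v_i$; or else appeal to the fact that $\csphere$ is compact so that the flows are complete and $G$ can be taken as the closure of the generated subgroup. With $G = \Aut_\sO(\csphere)^0$ the tangent space to the orbit at $x$ contains the images of all global vector fields at $x$, in particular $v_1(x), v_2(x), v_3(x)$, so the orbit is open and the contradiction goes through. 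This reduces the corollary to a one-line consequence of Theorem \ref{thm: almost_homogeneous} together with the anticanonical section produced by Proposition \ref{prop: wedging sections}.
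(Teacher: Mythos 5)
Your proof is correct and takes essentially the same route as the paper: the paper also deduces the bound by noting that the maximal dimension of an $\Aut_\sO(\csphere)$-orbit equals the largest $k$ for which the wedge map $\Wedge^k H^0(\csphere,T_{\csphere}) \to H^0(\csphere,\Wedge^k T_{\csphere})$ is non-zero, so that $h^0(T_{\csphere})=3$ would, via Proposition \ref{prop: wedging sections}, produce an open orbit contradicting Theorem \ref{thm: almost_homogeneous}. Your extra care about taking $G=\Aut_\sO(\csphere)^0$ as a closed complex subgroup is a fine (if not strictly necessary) elaboration of what the paper leaves implicit.
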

\begin{proof}
  Note that the maximal dimension of an $\Aut_\ko(\csphere)$-orbit is 
  \[
	\max\left\{ k \,\middle|\, \Wedge^kH^0 (\csphere, T_{\csphere}) \to H^0 \left(\csphere, \Wedge^kT_{\csphere}\right) \text{ is non-zero}\right\}.
	\]
  Then the result follows directly from Theorem \ref{thm: almost_homogeneous} and Proposition 
  \ref{prop: wedging sections}. 
\end{proof}

\subsection{Ingredients of the proof}
The proof of Theorem \ref{thm: almost_homogeneous} is quite involved, but we don't feel we can improve on the presentation in \cite{HKP}; simply  expanding it to make it accessible to a wider audience would go beyond the scope of this paper. Let us however take a look what kind of techniques go into it.

We argue by contradiction: let us assume that $\csphere$ is a complex $6$-sphere with $a(\csphere) = 0$ which is quasi-homogeneous. Then Proposition \ref{prop: wedging sections} implies that $\dim \Aut_\ko(\csphere) =  h^0(\csphere, T_{\csphere})=3$. So passing to the identity component of the universal cover of the automorphism group, we are in the situation that a $3$-dimensional simply connected complex Lie group  $G$ with Lie algebra $H^0(\csphere, T_{\csphere}) = \C\langle X_1, X_2, X_3\rangle$ acts with discrete ineffectivity on $\csphere$ with an open orbit
$ \Omega = G\cdot x_0$
and complement $E = \csphere\setminus \Omega$ a compact complex surface 
defined by the section $X_1\wedge X_2\wedge X_3\in H^0\left(\csphere, \Wedge^3T_{\csphere}\right)$. Note that $E$ is not empty, because every vector field has a zero as $c_3(\csphere) = e(\csphere)\neq0$.  

Now the proof proceeds by playing off against each other 
\begin{itemize}
 \item the topology of $\csphere$, $\Omega$, and $E$;
 \item the classification of low-dimensional complex Lie groups and their actions;
 \item the analytic geometry of $\csphere$, e.g., the fact that $\csphere$ contains only finitely many hypersurfaces by our assumption $a(\csphere)=0$ and \cite{KRA};
 \item the known structure of compact complex surfaces to control the geometry of (a desingularization of) $E$;
\end{itemize}
and we refer to \cite{HKP} for the details.

\subsection{Connection to complex structures on $\IP^3$}
We will now explain a connection between the Hopf problem and the existence of exotic complex structures on complex projective space $\IP^3$.

First of all, denoting by $\bar \IP^3$ the differentiable manifold $\IP^3$ with the reversed orientation then complex conjugation and connected sum with a sphere induce diffeomorphisms $\IP^3\isom \bar\IP^3\isom \csphere\sharp \bar\IP^3$.

Now assume we have a complex $\csphere$ with $a(\csphere)=0$ and  denote the blow up at a point $p$ by $\pi_p\colon X_p\to \csphere$. Then $X_p$ is diffeomorphic to  the connected sum with $\bar\IP^3$, and hence to $\IP^3$ by the above. We can thus  consider the $X_p$ as a family of exotic\footnote{Since $a(X_p)=0$ the complex structure is clearly not the standard one.} complex structures on $\IP^3$, parametrized by $\csphere$.  Theorem~\ref{thm: almost_homogeneous} can be interpreted in this context as follows:

\begin{proposition}[Huckleberry, Kebekus, Peternell]
 Assume there exists a complex $\csphere$ with $a(\csphere)=0$. Then there is an at least $1$-dimensional non-trivial family of exotic complex structures on $\IP^3$. 
\end{proposition}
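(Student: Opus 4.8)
The goal is to prove that a hypothetical complex $\csphere$ with $a(\csphere)=0$ gives rise to an at least $1$-dimensional non-trivial family of exotic complex structures on $\IP^3$.

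The plan is to make precise the informal discussion immediately preceding the statement and extract from it the two assertions we must verify: first, that the blow-ups $X_p$ of $\csphere$ at varying points $p$ are all diffeomorphic to $\IP^3$ while carrying non-standard complex structures, and second, that the resulting family is genuinely at least $1$-dimensional and non-trivial.

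First I would establish the diffeomorphism type. The key topological input is the chain of diffeomorphisms $\IP^3 \isom \bar\IP^3 \isom \csphere \,\sharp\, \bar\IP^3$ recorded above, where $\bar\IP^3$ denotes $\IP^3$ with reversed orientation; the first arrow comes from complex conjugation and the second from connected sum with a standard sphere (which does not change the diffeomorphism type of a manifold of dimension $\geq 1$). Since blowing up a complex threefold at a point replaces a ball by the tautological $\IP^2$-bundle over the exceptional $\IP^2 \isom \IP(T_{\csphere,p})$, the underlying smooth manifold of $X_p = \mathrm{Bl}_p \csphere$ is the connected sum of $\csphere$ with $\bar\IP^3$; this is the standard fact that point blow-up of a complex threefold corresponds differentiably to connected sum with the reversed-orientation projective space. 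Combining these, each $X_p$ is diffeomorphic to $\IP^3$.

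Next I would argue that the complex structures are exotic and that the family is non-trivial. Each $X_p$ is a compact complex threefold with $a(X_p)=0$: indeed $\pi_p$ is a proper modification, so $\sM(X_p) \isom \sM(\csphere) = \IC$ because algebraic dimension is a bimeromorphic invariant, whereas the standard complex structure on $\IP^3$ has $a(\IP^3)=3$. Hence none of the $X_p$ is biholomorphic to standard $\IP^3$, so each carries an exotic complex structure. To see that the family over $\csphere$ is at least $1$-dimensional and non-trivial, I would observe that the point blow-up construction, performed fibrewise over the parameter space $\csphere$ itself (blowing up the diagonal in $\csphere \times \csphere$ relative to the second factor), produces a holomorphic family $\sX \to \csphere$ whose fibre over $p$ is $X_p$; since $\dim_\IC \csphere = 3 \geq 1$ the parameter space has positive dimension, and the family is non-trivial because the fibres are pairwise non-isomorphic as the exceptional divisor distinguishes the blown-up point.

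The main obstacle I expect is verifying non-triviality rigorously — that is, ruling out the possibility that the $X_p$ are all abstractly biholomorphic despite the varying location of the exceptional divisor, and more delicately ruling out that the family $\sX \to \csphere$ is holomorphically isotrivial. The clean way around this is to note that any biholomorphism $X_p \to X_q$ would have to carry the unique $\pi$-exceptional divisor of $X_p$ to that of $X_q$ (it is the only divisor with the relevant normal-bundle structure, and by $a(\csphere)=0$ there are only finitely many hypersurfaces on $\csphere$, cf.\ \cite{KRA}), hence descend to a biholomorphism of $\csphere$ taking $p$ to $q$; since $\Aut_\sO(\csphere)$ is a finite-dimensional Lie group acting on the $6$-dimensional $\csphere$, its orbits cannot exhaust a positive-dimensional family of points, giving infinitely many pairwise non-biholomorphic members and hence a non-trivial family.
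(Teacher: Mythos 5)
Your setup is sound and runs parallel to the paper: the identification of the smooth structure of $X_p$ with $\IP^3$, the observation that $a(X_p)=a(\csphere)=0$ (bimeromorphic invariance of the algebraic dimension) rules out the standard structure, and the reduction to showing that any biholomorphism $\psi\colon X_p\to X_q$ must carry $E_p$ to $E_q$ and hence descend to an automorphism of $\csphere$ taking $p$ to $q$. For that descent step the paper argues cohomologically --- $[\psi(E_p)]$ generates $H^2(X_q,\IZ)=\IZ[E_q]$ with $[E_q]^2\neq 0$, so $\psi(E_p)$ meets $E_q$, and if the intersection were a curve then $\pi_q\circ\psi|_{E_p}$ would be a non-constant map from $\IP^2$ to a surface contracting a curve, which does not exist --- whereas you assert a characterization of $E_p$ by its normal bundle among the finitely many hypersurfaces; that is sketchier but repairable, and not the real problem.

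The genuine gap is your final step. The claim that ``since $\Aut_\sO(\csphere)$ is a finite-dimensional Lie group acting on the $6$-dimensional $\csphere$, its orbits cannot exhaust a positive-dimensional family of points'' is false as a general principle: a finite-dimensional Lie group can perfectly well act with a dense open orbit, or even transitively --- $\PGL_4$ acts transitively on $\IP^3$, and, more to the point, a complex Lie group of dimension exactly $3$ (which is all that Proposition~\ref{prop: wedging sections} leaves open, since it only gives $h^0(T_{\csphere})\leq 3$) can have a dense open orbit on a threefold, e.g.\ $(\C^*)^3$ acting on $\IP^3$. If $\Aut_\sO(\csphere)$ had an open orbit $\Omega$, then all $X_p$ with $p\in\Omega$ would be pairwise biholomorphic, and the complement $\csphere\setminus\Omega$ is a proper analytic subset which could a priori consist of finitely many orbits, so no non-trivial positive-dimensional family would result. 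Excluding exactly this scenario is the content of Theorem~\ref{thm: almost_homogeneous} --- the hard theorem of Huckleberry--Kebekus--Peternell that a hypothetical $\csphere$ with $a(\csphere)=0$ is not almost homogeneous --- and this is precisely the input the paper invokes in order to choose a $1$-dimensional disc transversal to the (then necessarily lower-dimensional) orbits. Your argument never appeals to this theorem, and without it the conclusion does not follow: the proposition is genuinely a corollary of Theorem~\ref{thm: almost_homogeneous}, not of finite-dimensionality of the automorphism group alone.
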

\begin{proof}
With the above notation let $E_p = \pi^{-1}(p)\isom \IP^2$ be the exceptional divisor of the blow up. 

Assume $\psi\colon X_p \to X_q$ is a biholomorphic map.
  The image $\psi(E_p)$ generates $H^2(X_q,\IZ)= \IZ[E_q]$ and $[E_q]^2\neq 0 $, so its intersection
  with $E_q$ is a curve or $E_q$ itself.
If $\psi(E_p)\cap E_q$ is a curve then the composition $\pi_q\circ \psi|_{E_p}$ is a non-constant map from $\IP^2$ to a surface which contracts a curve to a point. Such a map does not exist. 
    So $\psi(E_p) = E_q$ and $\psi$ covers a holomorphic automorphism $\bar \psi\in \Aut_\ko(\csphere)$.
  We conclude that $X_p$ and $X_q$ are biholomorphic if and only if $p$ and $q$ are in the same   $\Aut_\ko(\csphere)$-orbit. 
    
  By Theorem \ref{thm: almost_homogeneous} the complex manifold  $\csphere$ is not almost homogeneous and therefore we can choose a small $1$-dimensional embedded disc $\Delta$ which is transversal to the orbits of the automorphism group. Then $\{X_p\}_{p\in \Delta}$ gives the desired $1$-dimensional family of exotic complex structures on $\IP^3$. 
\end{proof}
The proof shows that in a set-theoretic sense the orbit space of the $\Aut_\ko(\IS^6)$-action classifies the exotic complex structures that we obtain, but this orbit space will usually not be a complex analytic space.


\end{document}